\documentclass[a4paper]{amsart}

\usepackage{amsmath,amsthm,amssymb,esint}
\usepackage[colorlinks=true,urlcolor=blue,citecolor=red,linkcolor=blue,linktocpage,pdfpagelabels,bookmarksnumbered,bookmarksopen]{hyperref}
\usepackage{enumerate,paralist}
\usepackage[pdftex]{graphicx}
	\graphicspath{{./}{figures/}}
\usepackage{tikz}
	\usetikzlibrary{calc,arrows,positioning}

\newcommand{\abs}[1]{\left\vert{#1}\right\vert}
\newcommand{\ave}[1]{\left\langle{#1}\right\rangle}
\newcommand{\cB}{\mathcal{B}}
\newcommand{\iunit}{\mathbf{i}}
\newcommand{\cK}{\mathcal{K}}
\newcommand{\Lipone}{\textup{Lip}_1(\R^d)}
\newcommand{\N}{\mathbb{N}}
\newcommand{\R}{\mathbb{R}}
\newcommand{\Z}{\mathbb{Z}}
\newcommand{\MNone}{\mathcal{M}^N_1(\R^d)}
\newcommand{\vd}{v_\textup{d}}

\let\div\relax\DeclareMathOperator{\div}{div}
\DeclareMathOperator{\Img}{\Im\textup{m}}
\DeclareMathOperator{\Lip}{Lip}
\DeclareMathOperator{\Real}{\Re\textup{e}}
\DeclareMathOperator{\supp}{supp}

\theoremstyle{plain}
\newtheorem{assumption}{Assumption}[section]
\newtheorem{lemma}[assumption]{Lemma}
\newtheorem{proposition}[assumption]{Proposition}

\newtheorem{theorem}[assumption]{Theorem}

\theoremstyle{definition}
\newtheorem{example}[assumption]{Example}

\title[Comparing microscopic and macroscopic crowd models]{Comparing first order microscopic and macroscopic crowd models for an increasing number of massive agents}
\author{	Alessandro Corbetta}
\address{Department of Structural and Geotechnical Engineering, Politecnico di Torino, Corso Duca degli Abruzzi 24, 10129 Torino, Italy \newline\indent
Department of Applied Physics and Centre for Analysis, Scientific computing and Applications, Department of Mathematics and Computer Science, Eindhoven University of Technology, P.O. Box 513, 5600 MB Eindhoven, The Netherlands}
\email{a.corbetta@tue.nl}
\author{Andrea Tosin}
\address{Istituto per le Applicazioni del Calcolo ``M. Picone'', Consiglio Nazionale delle Ricerche, Via dei Taurini 19, 00185 Roma, Italy}
\email{a.tosin@iac.cnr.it}
\thanks{The first author was funded by a ``Lagrange'' Ph.D. scholarship granted by the CRT Foundation, Torino, Italy and by Eindhoven University of Technology, Eindhoven, The Netherlands.}

\begin{document}

\subjclass[2010]{35F25, 35L65, 35Q70}

\keywords{Crowd dynamics, first order models, microscopic, macroscopic, nonlocal interactions}

\begin{abstract}
In this paper a comparison between first order microscopic and macroscopic differential models of crowd dynamics is established for an increasing number $N$ of pedestrians. The novelty is the fact of considering massive agents, namely particles whose individual mass does not become infinitesimal when $N$ grows. This implies that the total mass of the system is not constant but grows with $N$. The main result is that the two types of models approach one another in the limit $N\to\infty$, provided the strength and/or the domain of pedestrian interactions are properly modulated by $N$ at either scale. This is consistent with the idea that pedestrians may adapt their interpersonal attitudes according to the overall level of congestion.
\end{abstract}

\maketitle

\section{Introduction}     
Pedestrians walking in crowds exhibit rich and complex dynamics, which in the last years generated problems of great interest for different scientific communities including, for instance, applied mathematicians, physicist, and engineers (see~\cite[Chapter 4]{cristiani2014BOOK} and~\cite{duives2013TRC,venuti2009PLR} for recent surveys). This led to the derivation of numerous mathematical models providing qualitative and possibly also quantitative descriptions of the system,~\cite{corbetta2015MBE,johansson2007ACS,zanlungo2012PONE}.

When deducing a mathematical model for pedestrian dynamics different observation scales can be considered. Two extensively used options are the microscopic and the macroscopic scales. Microscopic models describe the time evolution of the position of each single pedestrian, addressed as a discrete particle~\cite{fehrenbach2014PREPRINT,helbing1995PRE,hoogendoorn2003OCAM,maury2007ESAIM}. Conversely, macroscopic models deal with a spatially averaged representation of the pedestrian distribution, which is treated as a continuum in terms of the pedestrian density~\cite{bruno2009JSV,colombo2012M3AS,hughes2002TRB,piccoli2011ARMA,twarogowska2014AMM}. Furthermore, crowds have been also represented at the mesoscopic scale~\cite{agnelli2015M3AS,degond2013JSP,degond2013NHM} or via discrete systems such as Cellular Automata~\cite{burstedde2001PA,kirchner2002PA}.

Different observation scales serve different purposes: the microscopic scale is more informative when considering very localized dynamics, in which the action of single individuals is relevant; conversely, the macroscopic scale is appropriate when insights into the ensemble (collective) dynamics are required or  when high densities are considered. In addition to this, spatially discrete and continuous scales may provide a dual representation of a crowd useful to formalize aspects such as pedestrian perception and the interplay between individualities and collectivity~\cite{bruno2013PREPRINT,cristiani2011MMS,cristiani2014BOOK}. Selecting the most adequate representation may present difficulties, because different outcomes at different scales are likely to be observed. Nevertheless, independently of the scale, models are often deduced out of common phenomenological assumptions, hence they are expected to reproduce analogous phenomena. The question then arises when and how they are comparable to each other.

\begin{figure}[!t]
\includegraphics[width=\textwidth]{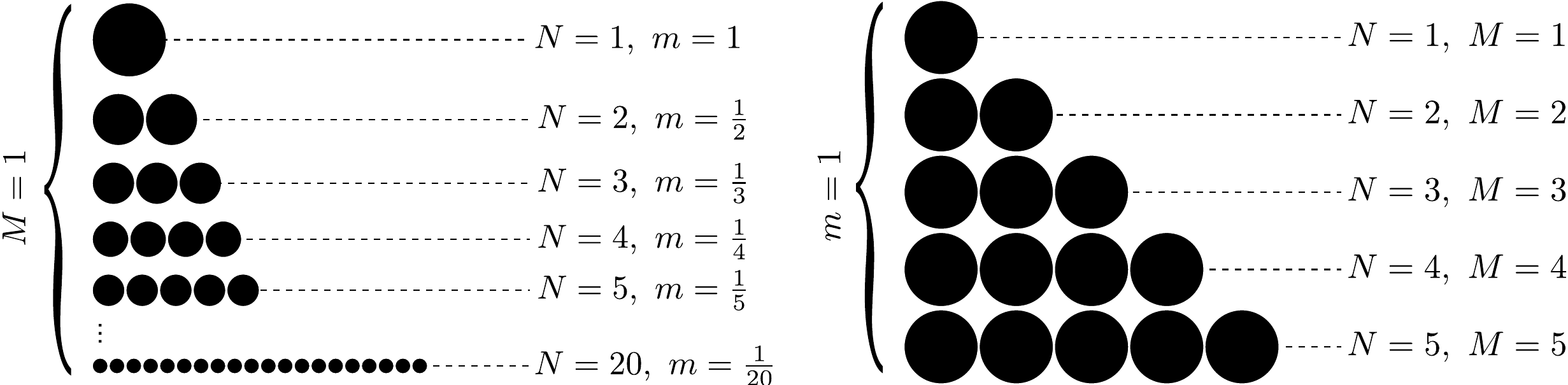}
\caption{Left: Classical mean-field point of view, in which the total mass $M$ of the system is constant while the mass $m$ of each particle becomes infinitesimal as $N$ increases. Right: The point of view pursued in this paper, in which $m$ is constant (\emph{massive particles}) while $M$ grows when $N$ increases.}
\label{fig:meanfield_vs_massive}
\end{figure}

These arguments provide the motivation for this paper, in which a comparison of microscopic and macroscopic crowd models is carried out for a growing number $N$ of pedestrians. It is well known that the statistical behavior of microscopic systems of interacting particles can be described, for $N\to\infty$, by means of a Vlasov-type kinetic equation derived in the \emph{mean field limit} under the assumption that the strength of pairwise interactions is scaled as $N^{-1}$ (\emph{weak coupling scaling}), see~\cite{carrillo2009KRM,carrillo2010MSSET} and references therein. If the total mass of the system is $M=mN$, where $m$ is the mass of each particle, this corresponds to assuming that particles generate an interaction potential in space proportional to their mass $m=M/N$ (like, e.g, in gravitational interactions). The mean field limit requires the assumption of a constant total mass of the system, say $M=1$, which implies that the mass of each particle becomes infinitesimal as $N$ grows (cf. Fig.~\ref{fig:meanfield_vs_massive} left). On the contrary, considering continuous models \emph{per se}, parallelly to discrete ones, allows one to keep the mass $m$ of each individual constant, say $m=1$, thus $M=N$ holds  (cf. Fig.~\ref{fig:meanfield_vs_massive} right). In this perspective, a comparison with discrete models based on the role of $N$ acquires a renewed interest.

This point of view is being introduced also in the context of other systems of interacting particles, such as e.g., vehicular traffic. Quoting from the conclusions of the lecture~\cite{seibold2015IPAM}:
\begin{quote}
Real traffic is microscopic. Ideally, accurate macroscopic models should not focus on the limit $N\to\infty$, but represent the solution with the true number of vehicles $N$.
\end{quote}
Pedestrian crowds are microscopic as well, hence  macroscopic crowd models should be built consistently with the phenomenology of a \emph{finite number} of microscopic \emph{massive} pedestrians.
Of course, we cannot expect the microscopic and macroscopic solutions to be the same for all numbers of pedestrians, however we can ascertain if the two types of models are actually ``the same model'' at least in some asymptotic regime. In this sense we  address the limit $N\to\infty$.

The two types of models which will be considered throughout the paper assume  first order position-dependent pedestrian dynamics, given via the walking velocity.
 Specifically, in the microscopic case, let $X^1_t,\,X^2_t,\,\dots,\,X^N_t\in\R^d$ be the positions of $N$ pedestrians at a time $t$. Their evolution  satisfies
\begin{equation}
	\dot{X}_t^i=\vd(X_t^i)-\sum_{j=1}^NK(X_t^j-X_t^j), \qquad i=1,\,\dots,\,N.
	\label{eq:micro-proto}
\end{equation}
Conversely, in the macroscopic case, let $\rho_t(x)$ be the density of pedestrians in the point $x\in\R^d$ at time $t$, such that $\int_{\R^d}\rho_t(x)\,dx=N$ for all $t\geq 0$. In some analogy with~\eqref{eq:micro-proto}, its evolution is  given by the conservation law
\begin{equation}
	\partial_t\rho_t+\div\left(\rho_t\left(\vd-\int_{\R^d}K(y-\cdot)\rho_t(y)\,dy\right)\right)=0.
	\label{eq:macro-proto}
\end{equation}
In both cases, pedestrian velocity is modeled as a sum of two terms: a \emph{desired velocity} $\vd$, which walkers would keep in the absence of others, plus \emph{repulsive} (whence the minus sign) \emph{interactions}, which perturb $\vd$ for collision avoidance purposes. By assumption, interactions depend on the relative distance between pairs of interacting pedestrians via an \emph{interaction kernel} $K$. 

Models~\eqref{eq:micro-proto} and~\eqref{eq:macro-proto} can  be recognized as particular instances of a scale-agnostic measure-valued conservation law. This abstract conservation law will play the role of a pivot in the comparison performed here.
%
%
The comparison and the paper are organized as follows: in Section~\ref{sec:mod_framework} we introduce and briefly discuss the scale-agnostic modeling framework. In Section~\ref{sec:stationary} we give a first comparison result of discrete and continuous dynamics in the one-dimensional stationary case, which leads to the computation of the so-called \emph{speed diagrams}. The main result is that the asymptotic pedestrian speeds predicted by models~\eqref{eq:micro-proto},~\eqref{eq:macro-proto} are not the same and that they cannot even be expected to match one another for large numbers of pedestrians. In Section~\ref{sec:non-stationary} we then give a second more complete comparison result in a general $d$-dimensional time-evolutionary setting. We consider sequences of pairs of discrete and continuous models of the form~\eqref{eq:micro-proto},~\eqref{eq:macro-proto} indexed by the total number $N$ of pedestrians and, as main contributions, we establish:
\begin{inparaenum}[(i)]
\item for fixed $N$, a stability estimate relating the distance between those pairs of models at a generic instant $t>0$ to the one at the initial time $t=0$;
\item for fixed $t$, a family of scalings of the interactions, comprising the afore-mentioned mean field as a particular case, which control the amplification of such a distance when $N$ grows;
\item a procedure to construct discrete and continuous initial configurations of the crowd giving rise to mutually convergent sequences of discrete and continuous models at all times when $N$ grows.
\end{inparaenum}
Finally, in Section~\ref{sec:discussion} we discuss the implications of the obtained results on the modeling of crowd dynamics and other particle systems in a multiscale perspective.

\section{A scale-agnostic modeling framework}
\label{sec:mod_framework}
Models~\eqref{eq:micro-proto} and~\eqref{eq:macro-proto} are particular cases of the measure-valued conservation law in $\R^d$
\begin{equation}
	\partial_t\mu_t+\div(\mu_tv[\mu_t])=0,
	\label{eq:cont-gen}
\end{equation} 
where $t$ is the time variable, $\mu_t$ is a time-dependent spatial measure of the crowding and $v[\mu_t]$ is a measure-dependent velocity field to be prescribed (see below). More specifically, $\mu_t$ is a positive locally finite Radon measure defined on $\cB(\R^d)$, the Borel $\sigma$-algebra on $\R^d$, which satisfies
\begin{equation}
	\mu_t(\R^d)=N, \qquad \forall\,t\geq 0,
	\label{eq:number-of-ped}
\end{equation}
where, we recall, $N\in\N$ is the (conserved) number of pedestrians. The cases $d=1,\,2$ are commonly considered to model, respectively, scenarios in which pedestrians are aligned e.g., along a walkway ($d=1$) or can walk in a given planar area ($d=2$). Equation~\eqref{eq:cont-gen} has to be understood in the proper weak formulation:
\begin{equation}
	\int_{\R^d}\phi\,d\mu_t=\int_{\R^d}\phi\,d\mu_0+\int_0^t\int_{\R^d}\nabla\phi\cdot v[\mu_s]\,d\mu_s\,ds, \qquad \forall\,\phi\in C^\infty_c(\R^d),
	\label{eq:weak-form}
\end{equation}
$C^\infty_c(\R^d)$ being the space of infinitely smooth and compactly supported test functions $\phi:\R^d\to\R$. From~\eqref{eq:weak-form} it can be formally checked that, given an initial measure $\mu_0$ (initial configuration of the crowd), at time $t>0$ it results
\begin{equation}
	\mu_t=\gamma_t\#\mu_0 \quad \textup{i.e.} \quad \mu_t(E)=\mu_0(\gamma_t^{-1}(E)) \quad \forall\,E\in\cB(\R^d),
	\label{eq:pushfwd}
\end{equation}
where $\#$ is the \textit{push forward} operator (see e.g.,~\cite{ambrosio2008BOOK}) and $\gamma_t$ is the \emph{flow map} defined as
\begin{equation}
	\gamma_t(x)=x+\int_0^t v[\mu_s](\gamma_s(x))\,ds.
	\label{eq:def-flow-map}
\end{equation}
It is worth stressing that, under proper regularity conditions on the velocity $v$, all solutions $t\mapsto\mu_t$ of the Cauchy problem associated to~\eqref{eq:cont-gen} are continuous functions of time~\cite{piccoli2013AAM,tosin2011NHM} and admit the representation~\eqref{eq:pushfwd}-\eqref{eq:def-flow-map}~\cite{ambrosio2008BOOK}.

Such a measure-based framework features an intrinsic generality, indeed it can describe a discrete crowd distribution when $\mu_t$ is an atomic measure:
\begin{equation}
	\mu_t=\epsilon_t:=\sum_{i=1}^N\delta_{X^i_t},
	\label{eq:construction-of-meas-epsilon}
\end{equation}
where $\{X^i_t\}_{i=1}^{N}\subset\R^d$ are the positions of pedestrians at time $t$, or a continuous crowd distribution when $\mu_t$ is an absolutely continuous measure with respect to the $d$-dimensional Lebesgue measure:
\begin{equation}
	\mu_t=\rho_t
	\label{eq:construction-of-meas-rho}
\end{equation}
with $\rho_t(\R^d)=\int_{\R^d}d\rho_t=N$ for all $t\geq 0$. In this latter case, by Radon-Nykodim theorem, $\mu_t$   admits a density, i.e., the crowd density. For ease of notation, we will systematically confuse the measure $\rho_t$ with its density and write $d\rho_t(x)$ or $\rho_t(x)\,dx$ interchangeably. Moreover, when required by the context, we will write explicitly the number $N$ of pedestrians as superscript of the measures (e.g., $\mu_t^N$, $\epsilon^N_t$, and $\rho^N_t$).

Once plugged into~\eqref{eq:weak-form}, the measures~\eqref{eq:construction-of-meas-epsilon},~\eqref{eq:construction-of-meas-rho} produce models~\eqref{eq:micro-proto},~\eqref{eq:macro-proto}, respectively, if the following velocity is used:
\begin{equation}
	v[\mu_t](x)=\vd(x)-\int_{\R^d}K(y-x)\,d\mu_t(y).
	\label{eq:velocity-transport-eq}
\end{equation}
The interaction kernel $K:\R^d\to\R^d$ represents pairwise interactions occurring among walking pedestrians, which, in normal conditions (i.e., no panic), tend to be of mutual avoidance and finalized at maintaining a certain comfort distance. As a consequence, they are supposed to be repulsive-like, whence $-K(z)\cdot z\leq 0$ for all $z\in\R^d$. Moreover, they are known to happen within a bounded region in space, the so-called \emph{sensory region}~\cite{fruin1971BOOK}. Therefore $K$ has compact support, which, for a pedestrian in position $x$, we denote by
\[ S_R(x):=\supp{K(\cdot-x)}\subseteq B_R(x), \]
$B_R(x)$ being the ball centered in $x$ with radius $R$. Therefore, $R$ is the maximum distance from $x$ at which interactions are effective. If the sensory region is not isotropic (as it is the case for pedestrians, who interact preferentially with people ahead), its orientation is expected to depend on the pedestrian gaze direction~\cite{fruin1971BOOK}. Nonetheless, in the following, we assume for simplicity that $S_R(x)$ is just a rigid translation of a prototypical region $S_R(0)=\supp{K}\subseteq B_R(0)$. Extending the proposed setting to models featuring fully orientation dependent sensory regions is mainly a technical issue, for which the reader can refer e.g., to~\cite{evers2014PREPRINT,tosin2011NHM}.

According to the arguments set forth, \eqref{eq:cont-gen} allows for a formal qualitative correspondence between the two modeling scales, nonetheless no quantitative correspondence is established  between the actual dynamics. The analysis of quantitative correspondences will be the subject of the next sections.

\section{The one-dimensional stationary case: speed diagrams}
\label{sec:stationary}
In this section we study and compare the stationary behavior of one-dimensional ($d=1$) microscopic and macroscopic homogeneous pedestrian distributions satisfying~\eqref{eq:cont-gen} with velocity~\eqref{eq:velocity-transport-eq}. Homogeneous conditions, yet to be properly defined at the two considered scales, represent dynamic equilibrium conditions possibly reached asymptotically, after a transient. In homogeneous conditions, the speed of pedestrians is expected to be a constant, depending exclusively on the number of pedestrians and on the length, say $L>0$, of the one-dimensional domain.    
 
The evaluation of the pedestrian speed in homogeneous crowding conditions is an established experimental practice which leads to the so-called \textit{speed diagrams}, i.e., synthetic quantitative relations between the density of pedestrians and their average speed~\cite{seyfried2008CA}. Usually, such diagrams feature a decreasing trend for increasing values of the density, and are defined up to a characteristic value (stopping density) at which the measured speed is zero. In the following, speed diagrams are studied as a function of the number of pedestrians $N$ (for an analogous experimental case cf.~\cite{corbetta2014TRP}), in the microscopic and macroscopic cases. In this context, $N$ is retained as the common element between the two descriptions, as it remains well-defined independently of the observation scale. It is finally worth pointing out that, although speed diagrams have been often used in mathematical models as closure relations (especially for macroscopic models, cf.~\cite{bruno2011AMM,colombo2009NARWA}), in this case they are a genuine output of the considered interaction rules expressed by the integral in~\eqref{eq:velocity-transport-eq}.  
     
\subsection{Modeling hypotheses}
\label{sec:hyp_stat}
For the sake of simplicity, we consider the one-di\-men\-sio\-nal problem on a periodic domain $[0,\,L)$. In order to model homogeneous pedestrian distributions, in the discrete case~\eqref{eq:micro-proto} we consider an equispaced lattice solution translating with a certain constant speed $w$ (to be determined), i.e.,
\begin{equation}
	\epsilon_t(x)=\bar{\epsilon}(x-wt),
	\label{eq:epsilonN-timet}
\end{equation}
where $\bar{\epsilon}$ has the form~\eqref{eq:construction-of-meas-epsilon} with atom locations such that
\begin{equation}
	\abs{\bar{X}^j-\bar{X}^i}=(j-i)\frac{L}{N}, \qquad i,\,j=1,\,\dots,\,N,
	\label{eq:equispaced-lattice}
\end{equation}
for $\bar{X}^1<\bar{X}^2<\dots<\bar{X}^N$ (the ordering is modulo $L$). Thus the atoms of $\epsilon_t$ are $X^i_t=\bar{X}^i+wt$ for $i=1,\,\dots,\,N$. In the continuous case~\eqref{eq:macro-proto}, we consider instead a constant density, i.e.,
\begin{equation}
	\rho_t(x)\equiv\bar{\rho}\geq 0,
	\label{eq:homog-meas}
\end{equation}
which, owing to~\eqref{eq:number-of-ped}, is given by $\bar{\rho}=N/L$.

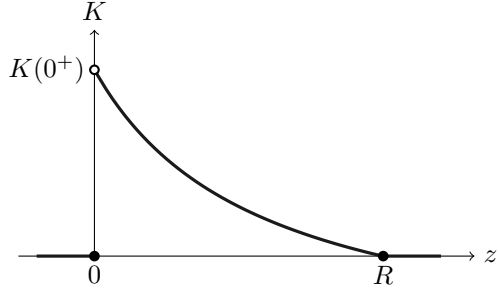
\begin{figure}[!t]
\begin{tikzpicture}[scale=2, Name/.style = {font={\bfseries}}]

     \draw[->] (-0.5,0) -- (2.5,0) node[right] {$z$};
     \draw[->] (0,0) -- (0,1.5) node[above] {$K$};

    \coordinate (origin) at (0,0);
    \coordinate (endl) at (4,0);
    \coordinate (X2) at (.8,0);
    \coordinate (X3) at (1.6,0);
    \coordinate (X4) at (2.4,0);
    \coordinate (X5) at (3.2,0);



    \newcommand*{\costA}{0.2}
    \newcommand*{\costR}{0.35}
    \newcommand*{\costXs}{2.85}

    \draw[scale=1.9,domain=0:1,smooth, variable=\x,black!90,very thick]  plot ({\x},{ .65*\costA/\costR * (\costR-\x/\costXs)/((\x/\costXs + \costA))  }); 
    \draw[scale=1.9,domain=-.2:0,smooth, variable=\x,black!90,very thick] plot({\x},{0});
    \draw[scale=1.9,domain=1:1.2,smooth, variable=\x,black!90,very thick] plot({\x},{0});

    \fill[black] (1.9,0) node [below] {$R$} circle (1pt);
    \fill[black] (0,1.9*.65) node [left] {$K(0^+)$} circle (1pt);
    \fill[white] (0,1.9*.65) circle (.6pt);

    \fill[black] (0,0) node [below] {$0$}   circle (1pt);
    
\end{tikzpicture}

\caption{Prototype of interaction kernel $K$ complying with Assumption~\ref{ass:K-properties}.}
\label{fig:K}
\end{figure}

We assume that the desired velocity is a positive constant $\vd>0$, therefore the movement is in the positive direction of the real line. Furthermore, we make the following assumptions on the interaction kernel (cf. Fig.~\ref{fig:K}):

\begin{assumption}[Properties of $K$]\hfill
\begin{enumerate}[(i)]
\item \label{ass:K-compact_supp} \emph{Compactness of the support and frontal orientation of the sensory region}. The support of $K$ is
\[ S_R(0)=[0,\,R] \]
with $0<R<L$.
\item \label{ass:K-smooth_bounded} \emph{Boundedness and regularity in $(0,\,R)$}. Pedestrian interactions vary smoothly with the mutual distance of the interacting individuals and have a finite maximum value. Specifically:
\[ K\in C^2(0,\,R), \quad K,\,K''\in L^\infty(0,\,R). \]
\item \label{ass:K-monotone} \emph{Monotonicity in $(0,\,R)$ and behavior at the endpoints}. We assume
\[ K(z)>0, \quad K'(z)<0 \quad \textup{for\ } z\in (0,\,R) \]
with moreover
\begin{align*}
	& K(0)=K(R)=0 \\
	& K(0^+):=\lim_{z\to 0^+}K(z)>0.
\end{align*}
Thus pedestrian interactions decay in the interior of the sensory region as the mutual distance increases and, moreover, pedestrians do not ``self-interact''.  This forces $K$ to be discontinuous in $z=0$.
\end{enumerate}
\label{ass:K-properties}
\end{assumption}

\subsection{Stationarity and stability of spatially homogeneous solutions}
\label{sec:stability}
Before proceeding with the comparison of asymptotic pedestrian speeds resulting from microscopic and macroscopic dynamics, we ascertain that the spatially homogeneous solutions~\eqref{eq:epsilonN-timet}-\eqref{eq:equispaced-lattice} and~\eqref{eq:homog-meas} are indeed stable and possibly attractive solutions to either~\eqref{eq:micro-proto} or~\eqref{eq:macro-proto}. This ensures that such distributions can indeed be considered as equilibrium distributions, and therefore that the evaluation of speed diagrams is well-posed.

The recent literature about discrete and continuous models of collective motions is quite rich in contributions dealing with the stability of special patterns, such as e.g., flocks, mills, and double mills, see~\cite{carrillo2009KRM,carrillo2010SIMA,carrillo2013PD,dorsogna2006PRL} and references therein. We consider here  much simpler one-dimensional configurations, however useful in this context because they reproduce mathematically the typical experimental setups in which pedestrian speed diagrams are measured, see e.g.~\cite{seyfried2010CA,zhang2014PLA}.

\begin{proposition}[Equilibrium of the microscopic model] The equispaced lattice solution~\eqref{eq:epsilonN-timet} is a stable solution to~\eqref{eq:micro-proto} for 
\begin{equation}
	w=\vd-\sum_{h=1}^{N-1}K\left(h\frac{L}{N}\right).
	\label{eq:vel-lattice}
\end{equation}
It is moreover attractive if $R>L/N$.
\label{prop:stability_micro}
\end{proposition}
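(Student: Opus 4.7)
The proof splits naturally into two parts: first, verifying that the translating lattice is an exact solution of~\eqref{eq:micro-proto} with speed $w$ as in~\eqref{eq:vel-lattice}; second, establishing stability (and, under $R>L/N$, attractivity) by a circulant linearization diagonalized in the discrete Fourier basis of $\Z/N\Z$.

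For the first part I would substitute $X_t^i=\bar X^i+wt$ into~\eqref{eq:micro-proto}. On the periodic domain $[0,L)$, the signed distance from pedestrian $i$ to pedestrian $i+h$ (with the ordering in~\eqref{eq:equispaced-lattice}, indices modulo $N$) equals $hL/N$ modulo $L$. Since $\supp K\subseteq[0,R]\subset[0,L)$ and $K(0)=0$, only pedestrians strictly ahead contribute, and
\[
	\sum_{j=1}^N K(X_t^j-X_t^i)=\sum_{h=1}^{N-1}K\!\left(h\tfrac{L}{N}\right),
\]
which is independent of $i$. The ODE then collapses to $\dot X_t^i=w$ with $w$ as in~\eqref{eq:vel-lattice}, confirming the ansatz.

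For stability I would set $X_t^i=\bar X^i+wt+\xi_t^i$ with $\xi_t^i$ small, subtract the equilibrium identity, and linearize. Under the mild non-resonance assumption $hL/N\ne R$ for $h=1,\dots,N-1$, and for $\abs{\xi}$ small enough that no interacting pair crosses a discontinuity of $K$, Assumption~\ref{ass:K-properties} yields
\[
	\dot\xi_t^i=\sum_{h=1}^{N-1}\bigl\lvert K'(hL/N)\bigr\rvert\,(\xi_t^{i+h}-\xi_t^i),
\]
whose coefficients are non-negative since $K'<0$ on $(0,R)$. This is a circulant operator on $\R^N$, simultaneously diagonalized by the Fourier basis $\bigl\{(e^{2\pi\iunit im/N})_{i=1}^N\bigr\}_{m=0}^{N-1}$, with eigenvalues
\[
	\lambda_m=\sum_{h=1}^{N-1}\bigl\lvert K'(hL/N)\bigr\rvert\,\bigl(e^{2\pi\iunit hm/N}-1\bigr).
\]

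One checks immediately that $\lambda_0=0$, its eigenspace being spanned by the rigid shift $\xi^i\equiv\text{const}$, which does not move the lattice as a set; for every $m\ne 0$,
\[
	\Real\lambda_m=\sum_{h=1}^{N-1}\bigl\lvert K'(hL/N)\bigr\rvert\,\bigl(\cos(2\pi hm/N)-1\bigr)\le 0,
\]
which gives Lyapunov stability. To upgrade to attractivity modulo the translation mode one needs strict negativity for all $m\ne 0$, and since $\cos(2\pi m/N)<1$ for $m\ne 0$ it is enough that $\abs{K'(L/N)}>0$; by the monotonicity in Assumption~\ref{ass:K-properties} this is precisely the content of $R>L/N$. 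The main technical obstacle I anticipate is the non-smoothness of $K$ at the endpoints of $[0,R]$: the jump at $z=0$ and the vanishing at $z=R$ lie exactly on the interaction boundary, so one must restrict to a sufficiently small tube around the lattice orbit and invoke the generic condition $hL/N\ne R$ to legitimize the linearization, then close the argument with a Gr\"onwall-type propagation of smallness from initial to later times.
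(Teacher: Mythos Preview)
Your proposal is correct and follows essentially the same route as the paper: substitute the translating lattice to identify $w$, linearize in the perturbations, diagonalize the resulting circulant system in the discrete Fourier basis, and read off $\Real\lambda_m\le 0$ with strict inequality for $m\neq 0$ precisely when $R>L/N$. The only cosmetic differences are that the paper treats the regime $R\le L/N$ separately (all $K'(hL/N)$ vanish, so the linearized perturbations are constant) rather than absorbing it into the circulant spectral argument, and that your added caveats about the non-resonance condition $hL/N\neq R$ and the endpoint non-smoothness of $K$ are technical points the paper passes over in silence.
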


Proposition~\ref{prop:stability_micro} asserts that the equispaced pedestrian distribution is always a stable (quasi-)stationary solution to the microscopic model. This is somehow in contrast to what is found in microscopic \emph{optimal-velocity} traffic models, where the so-called \emph{POMs} (``Ponies-on-a-Merry-Go-Round'') \emph{solutions} can generate instabilities (traffic jams) depending on the total number of vehicles~\cite{bando1995PRE,seidel2009SIADS}. The rationale for this difference is that, unlike the present case, in such models vehicle interactions can be both repulsive and attractive depending on the distance of the interacting pairs.

\begin{proposition}[Equilibrium of the macroscopic model]
The spatially homogeneous solution~\eqref{eq:homog-meas} is a locally stable and attractive solution to~\eqref{eq:macro-proto}.
\label{prop:stability_macro}
\end{proposition}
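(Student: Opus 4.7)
My plan is to mirror the perturbative analysis carried out for the microscopic model in Proposition~\ref{prop:stability_micro}, this time in the continuous setting. As a preliminary step I would verify that $\rho_t\equiv\bar\rho=N/L$ is indeed a solution to~\eqref{eq:macro-proto}: by Assumption~\ref{ass:K-properties}\eqref{ass:K-compact_supp} and $L$-periodicity of the domain, the convolution integral against the constant density reduces to the spatial constant $\bar\rho\int_0^R K(z)\,dz$, so the velocity~\eqref{eq:velocity-transport-eq} collapses to the constant $w:=\vd-\bar\rho\int_0^R K(z)\,dz$, the flux $\bar\rho w$ is independent of $x$, and the equation is satisfied.

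Next I would perturb $\rho_t=\bar\rho+\tilde\rho_t$, noting that mass conservation~\eqref{eq:number-of-ped} forces $\int_0^L\tilde\rho_t\,dx=0$, which automatically removes the zero Fourier mode. Substituting into~\eqref{eq:macro-proto} and dropping the quadratic term in $\tilde\rho_t$ gives the linearization
\begin{equation*}
	\partial_t\tilde\rho_t+w\,\partial_x\tilde\rho_t-\bar\rho\,\partial_x\!\int_0^L K(y-x)\tilde\rho_t(y)\,dy=0.
\end{equation*}
Exploiting the translation invariance on $[0,L)$, I would Fourier expand $\tilde\rho_t(x)=\sum_{k\neq 0}c_k(t)e^{\iunit\frac{2\pi}{L}kx}$. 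The convolution acts multiplicatively on each mode through $\hat K_k:=\int_0^R K(z)e^{\iunit\frac{2\pi}{L}kz}\,dz$, so the modes decouple and each coefficient satisfies
\begin{equation*}
	\dot c_k(t)=\lambda_k\,c_k(t), \qquad \lambda_k:=\iunit\frac{2\pi k}{L}\bigl(\bar\rho\hat K_k-w\bigr).
\end{equation*}

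The decisive step, and the main obstacle, is proving that $\Real(\lambda_k)<0$ for every $k\neq 0$: a direct computation gives $\Real(\lambda_k)=-\frac{2\pi k}{L}\bar\rho\int_0^R K(z)\sin(2\pi k z/L)\,dz$, whose sign is not manifest because the sine oscillates. I would resolve this by integrating by parts against the specific antiderivative $(L/(2\pi k))\bigl(1-\cos(2\pi k z/L)\bigr)$, which vanishes at $z=0$ and, combined with $K(R)=0$, kills both boundary terms (incidentally making the jump of $K$ at the origin harmless). This produces
\begin{equation*}
	\Real(\lambda_k)=\bar\rho\int_0^R K'(z)\Bigl(1-\cos\tfrac{2\pi k z}{L}\Bigr)\,dz,
\end{equation*}
whose strict negativity follows at once from $K'<0$ on $(0,R)$ (Assumption~\ref{ass:K-properties}\eqref{ass:K-monotone}) together with the fact that $1-\cos(2\pi k z/L)>0$ almost everywhere on $(0,R)$ for each $k\neq 0$. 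Local stability and attractiveness of $\bar\rho$ follow. As a reassuring consistency check, this expression is the exact continuous analogue of~\eqref{eq:real.part.lambda} obtained in the microscopic case, both being weighted averages of $K'$ against $(1-\cos)$.
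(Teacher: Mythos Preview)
Your proposal is correct and follows the same overall architecture as the paper: verify that $\bar\rho$ gives a constant velocity, linearize the perturbation, Fourier-expand on the periodic domain, and reduce stability to showing $\Real(\lambda_k)<0$ for all $k\neq 0$. The formula you arrive at for $\Real(\lambda_k)$ coincides with the paper's $\Real(\sigma_k)$.

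The genuine difference is in how you establish the sign of $\Real(\lambda_k)$. The paper works directly with the integral $\int_0^L K(z)\sin(2\pi kz/L)\,dz$: it restricts to $k>0$ by parity, partitions $[0,L]$ into the $k$ full periods of the sine, and on each period uses the half-period shift identity to rewrite the contribution as $\int(K(z)-K(z+L/(2k)))\sin(\cdot)\,dz$ over the positive half-period; global monotonicity of $K$ then gives nonnegativity, with the $q=0$ term strictly positive. Your route---integration by parts against the particular antiderivative $(L/(2\pi k))(1-\cos)$, which kills both boundary contributions thanks to $v(0)=0$ and $K(R)=0$---is shorter and yields
\[
\Real(\lambda_k)=\bar\rho\int_0^R K'(z)\bigl(1-\cos\tfrac{2\pi kz}{L}\bigr)\,dz,
\]
whose sign is immediate from $K'<0$. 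Your approach uses the $C^1$ regularity of $K$ on $(0,R)$ (available from Assumption~\ref{ass:K-properties}\eqref{ass:K-smooth_bounded}, since $K''\in L^\infty$ makes $K'$ Lipschitz and hence bounded), whereas the paper's argument needs only monotonicity; but since the regularity is assumed anyway, this costs nothing. A bonus of your formulation, which you correctly note, is that it is the exact continuous analogue of the discrete expression~\eqref{eq:real.part.lambda}, making the parallel between Propositions~\ref{prop:stability_micro} and~\ref{prop:stability_macro} transparent.
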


For the sake of completeness, we report the proofs of  Propositions~\ref{prop:stability_micro} and~\ref{prop:stability_macro} in Appendix~\ref{app:proofs}.

\subsection{Discrete and continuous speed diagrams} 
We now calculate and compare the speed diagrams corresponding to the stable stationary homogeneous solutions studied in the previous sections, i.e., the mappings $N\mapsto v[\epsilon^N_t]$ and $N\mapsto v[\bar{\rho}^N]$, respectively.

Specifically, from Proposition~\ref{prop:stability_micro} we know
\begin{equation}
	 v[\epsilon_t^N]=\vd-\sum_{h=1}^{N-1}K\left(h\frac{L}{N}\right)
	\label{eq:speed-diagr-eps}
\end{equation}
while from~\eqref{eq:velocity-transport-eq} with $d=1$, $\mu_t=\bar{\rho}^N=N/L$ and taking Assumption~\ref{ass:K-properties} into account we deduce
\begin{equation}
 	v[\bar{\rho}^N]=\vd-\frac{N}{L}\int_0^L K(z)\,dz.
	\label{eq:speed-diagr-rho}
\end{equation}

Notice that both $v[\epsilon^N_t]$ and $v[\bar{\rho}^N]$ are decreasing functions of $N$, the trend being definitely linear in the continuous case. This is consistent with typical speed diagrams for pedestrians reported in the experimental literature, see e.g.,~\cite{daamen2004PhD,polus1983JTE}.

In order to compare the two speed diagrams we introduce the quantity
\[ \Delta{v}(N):=v[\epsilon^N_t]-v[\bar{\rho}^N]. \]
Actually, since in view of Proposition~\ref{prop:stability_micro} the equilibrium speed $v[\epsilon^N_t]$ depends only on the headways $\vert X^j_t-X^i_t\vert$, which are constant in time, we can drop the dependence on $t$ by freezing pedestrians in a particular configuration, for instance the one with $\bar{X}^i=(i-1)\frac{L}{N}$. Hence we will write simply $\bar{\epsilon}^N=\sum_{i=1}^{N}\delta_{\bar{X}^i}$.

\begin{figure}[!t]
\centering
\begin{tikzpicture}[scale=2, Name/.style = {font={\bfseries}}]
    \draw  [<->,thick] (0,1.5) node (yaxis) [left] {}
        |- (4.5,0) node (xaxis) [right] {$z$};

    \coordinate (origin) at (0,0);
    \coordinate (endl) at (4,0);
    \coordinate (X2) at (.8,0);
    \coordinate (X3) at (1.6,0);
    \coordinate (X4) at (2.4,0);
    \coordinate (X5) at (3.2,0);

    \fill[gray] (origin) rectangle (.4,.5) node [below left, text=black] {$E^1_{1/2}$};
    \fill[gray!40!black] (.4,0) rectangle (1.2,.5) node [below left, text=white] {$E^2$}; 
    \fill[gray] (1.2,0) rectangle (2.0,.5) node [below left, text=black] {$E^3$};
    \fill[gray!40!black] (2.,0) rectangle (2.8,.5) node [below left, text=white] {$E^4$};
    \fill[gray] (2.8,0) rectangle (3.6,.5) node [below left, text=black] {$E^5$};
    \fill[gray!40!black] (3.6,0) rectangle (4,.5) node [below left, text=white] {$E^6_{1/2}$};

    \fill[black] (origin) node  [below] {$\bar{X}^1=0$} circle (1pt);
    \fill[black] (X2) node [below] {$\bar{X^2}$} circle (1pt);
    \fill[black] (X3) node [below] {$\bar{X^3}$} circle (1pt);
    \fill[black] (X4) node [below] {$\bar{X^4}$} circle (1pt);
    \fill[black] (X5) node [below] {$\bar{X^5}$} circle (1pt);
    \fill[black] (endl) node [below] {$L$};

    \fill[black] (endl) circle (1pt);
    \fill[white] (endl) circle (.6pt);
    \newcommand*{\costA}{0.2}
    \newcommand*{\costR}{0.35}
    \newcommand*{\costXs}{2.85}

    \draw[scale=1.9,domain=0:1,smooth, dashed,variable=\x,red,thick]  plot ({\x},{ .65*\costA/\costR * (\costR-\x/\costXs)/((\x/\costXs + \costA))  }); 

    \fill[red] (1.9,0) node [below] {$R$} circle (.9pt);
    \fill[red] (0,1.9*.65) node [left] {$K(0^+)$} circle (.9pt);
    \fill[white] (0,1.9*.65)  circle (.55pt);
\end{tikzpicture}
\caption{The considered partition of the domain $[0,\,L)$ is shown in the case $N=5$. Solid dots correspond to pedestrian positions. The dashed line portrays an example of interaction kernel $K$. }
\label{fig:assumptions-fdiagr} 
\end{figure}
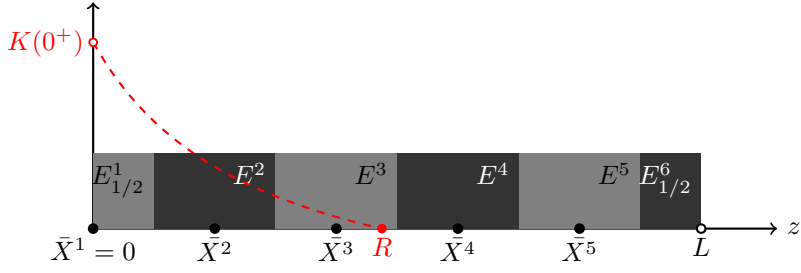

From Assumption~\ref{ass:K-properties}\eqref{ass:K-smooth_bounded} on the regularity of $K$, we can calculate $\Delta{v}(N)$ explicitly. To do that, we preliminarily introduce the following pairwise disjoint partition of the interval $[0,\,L]$ (cf. Fig.~\ref{fig:assumptions-fdiagr}):
\begin{align*}
	E_{1/2}^1 &= \left[0,\,\frac{L}{2N}\right] \\
	E^i &= \left(\left(i-\frac{3}{2}\right)\frac{L}{N},\,\left(i-\frac{1}{2}\right)\frac{L}{N}\right], \quad 2\leq i\leq N \\
	E_{1/2}^{N+1} &= \left(\left(N-\frac{1}{2}\right)\frac{L}{N},\,L\right],
\end{align*}
which is such that $\bar{X}^1\in E_{1/2}^1$, $\bar{X}^i\in E^i$ for $i=2,\,\dots,\,N$, while $E_{1/2}^{N+1}$ does not contain any of the atoms of $\bar{\epsilon}^N$. Then we have
\begin{align*}
	\Delta{v}(N) &= \int_0^L K(z)\,d(\bar{\rho}^N-\bar{\epsilon}^N)(z) \\
	& =\int_{E_{1/2}^1}K(z)\,d(\bar{\rho}^N-\bar{\epsilon}^N)(z)+\sum_{i=2}^N\int_{E^i}K(z)\,d(\bar{\rho}^N-\bar{\epsilon}^N)(z) \\
	&\phantom{=} +\int_{E_{1/2}^{N+1}}K(z)\,d\bar{\rho}^N(z)
\end{align*}
and in particular we compute:
\begin{itemize}
\item for the first integral,
\begin{align}
	\begin{aligned}[b]
		\int_{E_{1/2}^1}K(z)\,d(\bar{\rho}^N-\bar{\epsilon}^N)(z) &= \frac{N}{L}\int_{E_{1/2}^1}K(z)\,dz-K(\bar{X}^1) \\
		& =\frac{1}{2}\ave{K}_{E_{1/2}^1}-K(0)=\frac{1}{2}\ave{K}_{E_{1/2}^1}
	\end{aligned}
	\label{eq:first_int}
\end{align}
because $\bar{X}^1=0$ in the chosen configuration and moreover $K(0)=0$ (cf. Assumption~\ref{ass:K-properties}\eqref{ass:K-monotone}). We have denoted $\ave{K}_E:=\fint_E K(z)\,dz$, where $\fint$ is the integral mean;
\item for each of the integrals in the sum,
\[ \int_{E^i}K(z)\,d(\bar{\rho}^N-\bar{\epsilon}^N)(z)=\frac{N}{L}\int_{E^i}K(z)\,dz-K(\bar{X}^i)=\ave{K}_{E^i}-K(\bar{X}^i); \]
\item for the last integral,
\[ \int_{E_{1/2}^N}K(z)\,d\bar{\rho}^N(z)=\frac{N}{L}\int_{E_{1/2}^{N+1}}K(z)\,dz=\frac{1}{2}\ave{K}_{E_{1/2}^{N+1}}. \]
\end{itemize}
It follows
\begin{equation}
	\Delta{v}(N)=\frac{1}{2}\ave{K}_{E_{1/2}^1}+\sum_{i=2}^N(\ave{K}_{E^i}-K(\bar{X}^i))+\frac{1}{2}\ave{K}_{E_{1/2}^{N+1}}.
	\label{eq:deltav-evaluated}
\end{equation}

\begin{figure}[!t]
\includegraphics[width=0.8\textwidth]{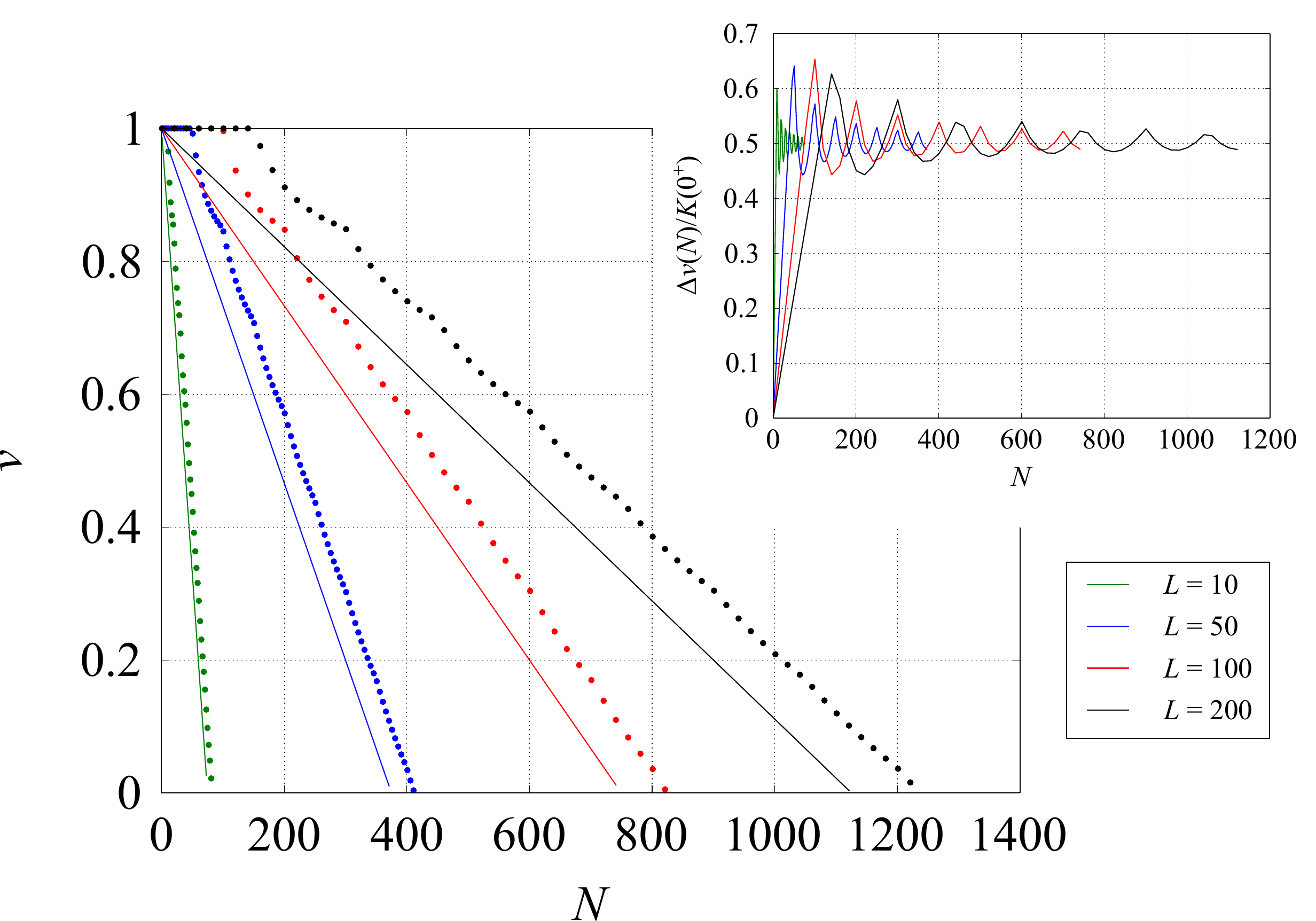}
\caption{Speed diagrams~\eqref{eq:speed-diagr-eps} (discrete model, dots) and~\eqref{eq:speed-diagr-rho} (continuous model, solid lines) obtained, for different values of $L$, with $\vd=1$, $K(z)=\frac{1}{5}(1-z^2)$. The graph in the upper-right box shows the convergence to $0.5$ of the quantity $\Delta{v}(N)/K(0^+)$ for $N\to\infty$.}
\label{fig:speed_diag-no_conv}
\end{figure}

A numerical evaluation of $\Delta{v}(N)$ is reported in Fig.~\ref{fig:speed_diag-no_conv} (up to a scaling with respect to $K(0^+)$). We observe that, when $N$ grows, the normalized curves approach the constant $\frac{1}{2}$, thus suggesting that $\Delta{v}(N)$ does not converge to $0$ for $N\to\infty$. This intuition is confirmed by the following

\begin{theorem}[Non-convergence of speed diagrams]
We have
\[ \lim_{N\to\infty}\Delta{v}(N)=\frac{1}{2}K(0^+). \]
\label{theo:convergence_stationary}
\end{theorem}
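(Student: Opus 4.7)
The plan is to analyse separately the three pieces in the decomposition~\eqref{eq:deltav-evaluated}: the left boundary term $\frac{1}{2}\ave{K}_{E^1_{1/2}}$, the right boundary term $\frac{1}{2}\ave{K}_{E^{N+1}_{1/2}}$, and the bulk sum $\sum_{i=2}^N(\ave{K}_{E^i}-K(\bar{X}^i))$. I expect the right boundary term to vanish identically for $N$ large, the bulk sum to be $o(1)$, and the left boundary term to deliver the advertised limit $\frac{1}{2}K(0^+)$.

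The two boundary terms are the easy pieces. Since $R<L$ and $\supp K=[0,R]$, once $N$ is large enough that $L-\frac{L}{2N}>R$ the interval $E^{N+1}_{1/2}=(L-\frac{L}{2N},\,L]$ lies entirely outside $\supp K$, so $\ave{K}_{E^{N+1}_{1/2}}=0$. For the left term, I would rewrite
\[
\ave{K}_{E^1_{1/2}}=\frac{2N}{L}\int_0^{L/(2N)}K(z)\,dz
\]
and invoke the existence of the right limit $K(0^+)$ from Assumption~\ref{ass:K-properties}\eqref{ass:K-monotone}: for any $\varepsilon>0$ there is $\delta>0$ with $|K(z)-K(0^+)|<\varepsilon$ on $(0,\delta)$, and for $N$ sufficiently large $L/(2N)<\delta$, so that $|\ave{K}_{E^1_{1/2}}-K(0^+)|<\varepsilon$.

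The heart of the argument is the bulk sum. Since $\bar{X}^i=(i-1)L/N$ is precisely the midpoint of $E^i$ (for $i\ge 2$), each summand $\ave{K}_{E^i}-K(\bar{X}^i)$ is a midpoint-rule quadrature error on an interval of width $L/N$. I would partition the indices $i\in\{2,\dots,N\}$ into three groups: (i) those with $E^i\subset(0,R)$, where $K\in C^2$ with $K''\in L^\infty$ by Assumption~\ref{ass:K-properties}\eqref{ass:K-smooth_bounded}, so the standard midpoint-rule error bound gives $|\ave{K}_{E^i}-K(\bar{X}^i)|\le \frac{(L/N)^2}{24}\|K''\|_{L^\infty(0,R)}$, and summing over at most $N$ such indices contributes $O(1/N)\to 0$; (ii) those with $E^i\subset[R,L]$, for which $K\equiv 0$ on $E^i$ and $K(\bar{X}^i)=0$, contributing zero; (iii) at most one exceptional index $i^*$ whose interval straddles $R$, where the midpoint-rule estimate does not apply since $K$ fails to be $C^2$ across $R$.

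The delicate step is controlling this stray term from group (iii). Since $K(R)=0$ by Assumption~\ref{ass:K-properties}\eqref{ass:K-monotone} and $K$ is continuous at $R$ (from the right trivially, and from the left because $K$ is monotone and bounded on $(0,R)$, forcing its left limit at $R$ to coincide with $K(R)=0$), for any $\varepsilon>0$ one can choose $\delta>0$ so that $|K(z)|<\varepsilon$ on $(R-\delta,\,R+\delta)$. For $N$ so large that $E^{i^*}\subset(R-\delta,\,R+\delta)$, both $\ave{K}_{E^{i^*}}$ and $K(\bar{X}^{i^*})$ are bounded by $\varepsilon$, so this stray term vanishes in the limit. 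Combining the three groups with the boundary estimates yields $\Delta v(N)\to \frac{1}{2}K(0^+)$, as claimed.
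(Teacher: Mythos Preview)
Your argument mirrors the paper's: both treat the two boundary cells directly and control the bulk sum via the midpoint/second-order Taylor error $\lvert\ave{K}_{E^i}-K(\bar X^i)\rvert\le \tfrac{1}{24}\|K''\|_{\infty}(L/N)^2$, so that $N-1$ terms contribute $O(1/N)$. In fact you are more scrupulous than the paper in isolating the single interval $E^{i^*}$ that may straddle $z=R$; the paper simply applies the Taylor bound ``for each term of the sum'' without separating this case.

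There is one genuine slip, however, in how you dispose of that straddling term. You write that $K$ is ``monotone and bounded on $(0,R)$, forcing its left limit at $R$ to coincide with $K(R)=0$''. Monotonicity and boundedness guarantee that $K(R^-):=\lim_{z\to R^-}K(z)$ \emph{exists}; they do not force it to equal the externally assigned value $K(R)=0$. (Take $K(z)=\tfrac{1}{2}(1+\tfrac{R-z}{R})$ on $(0,R)$ and set $K(R):=0$: this satisfies every clause of Assumption~\ref{ass:K-properties} you used, yet $K(R^-)=\tfrac12$.) If $K(R^-)>0$ the straddling contribution need not vanish and can even oscillate with $N$, so the theorem itself would fail. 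What is really being used is left-continuity of $K$ at $R$; this is not deducible from the listed hypotheses but is clearly the intended reading of Assumption~\ref{ass:K-properties} (only the jump at $0$ is singled out, and the prototype in Fig.~\ref{fig:K} is continuous at $R$). State that explicitly and your treatment of the stray term is fine --- and, as noted, cleaner than the paper's.
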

\begin{proof}
We consider one by one the terms at the right-hand side of~\eqref{eq:deltav-evaluated}.

First, the right endpoint of $E^1_{1/2}$ approaches the origin when $N$ grows, hence, owing to the mean value theorem and using the continuity of $K$ in $(0,\,R)$, cf. Assumption~\ref{ass:K-properties}\eqref{ass:K-smooth_bounded}, we find
\[ \lim_{N\to\infty}\frac{1}{2}\ave{K}_{E_{1/2}^1}=\frac{1}{2}K(0^+). \] 

Second, in view of the smoothness of $K$ in $(0,\,R)$, cf. Assumption~\ref{ass:K-properties}\eqref{ass:K-smooth_bounded}, for each term of the sum we can use a second order Taylor expansion with Lagrange remainder to discover
\begin{align*}
	\ave{K}_{E^i} &- K(\bar{X}^i) \\
	& =\fint_{E^i}\left(K(\bar{X}^i)+K'(\bar{X}^i)(z-\bar{X}^i)+\frac{1}{2}K''(\zeta^i)(z-\bar{X}^i)^2\right)\,dz-K(\bar{X}^i)
	\intertext{(for some $\zeta^i\in E^i$)}
	& =\fint_{E^i}\left(K'(\bar{X}^i)(z-\bar{X}^i)+\frac{1}{2}K''(\zeta^i)(z-\bar{X}^i)^2\right)\,dz \\
	& =\frac{1}{2}\fint_{E^i}K''(\zeta^i)(z-\bar{X}^i)^2\,dz
	\intertext{(because $z-\bar{X}^i$ is odd in $E^i$)}
	& \leq\frac{1}{2}\|K''\|_\infty\fint_{E^i}(z-\bar{X}^i)^2\,dz
		=\frac{1}{2}\|K''\|_\infty\frac{L^2}{12N^2}\xrightarrow{N\rightarrow\infty} 0
\end{align*}
which, for the arbitrariness of $i$, implies that the whole sum vanishes for $N\to\infty$.

Finally,
\[ \lim_{N\to\infty}\frac{1}{2}\ave{K}_{E_{1/2}^{N+1}}=0 \]
because of Assumption~\ref{ass:K-properties}\eqref{ass:K-compact_supp}, indeed for $N$ large it results $E_{1/2}^{N+1}\cap [0,\,R]=\emptyset$.
\end{proof}

From Theorem~\ref{theo:convergence_stationary} we conclude that the discrete system moves asymptotically at a higher speed than the continuous one because $K(0^+)>0$, as also Fig.~\ref{fig:speed_diag-no_conv} confirms. Ultimately, the discrete and continuous models~\eqref{eq:micro-proto},~\eqref{eq:macro-proto} predict different walking speeds at equilibrium, which do not match one another even in the limit of a large number of pedestrians. It is worth noticing that this fact does not actually depend on Assumption~\ref{ass:K-properties}\eqref{ass:K-monotone}, which states the absence of self-interactions ($K(0)=0$). Indeed, assuming the right continuity of $K$ in $0$, i.e., $K(0)=K(0^+)$, would still lead to a nonzero limit for $\Delta{v}(N)$: 
\[ \lim_{N\to\infty}\Delta{v}(N)=\lim_{N\to\infty}\frac{1}{2}\ave{K}_{E_{1/2}^1}-K(0)=-\frac{1}{2}K(0)<0, \]
cf.~\eqref{eq:first_int}. In this case, the discrete system is asymptotically slower than the continuous one, because discrete pedestrians are further slowed down by self-interactions (which, instead, do not affect the continuous system).

\begin{figure}[!t]
\includegraphics[width=0.8\textwidth]{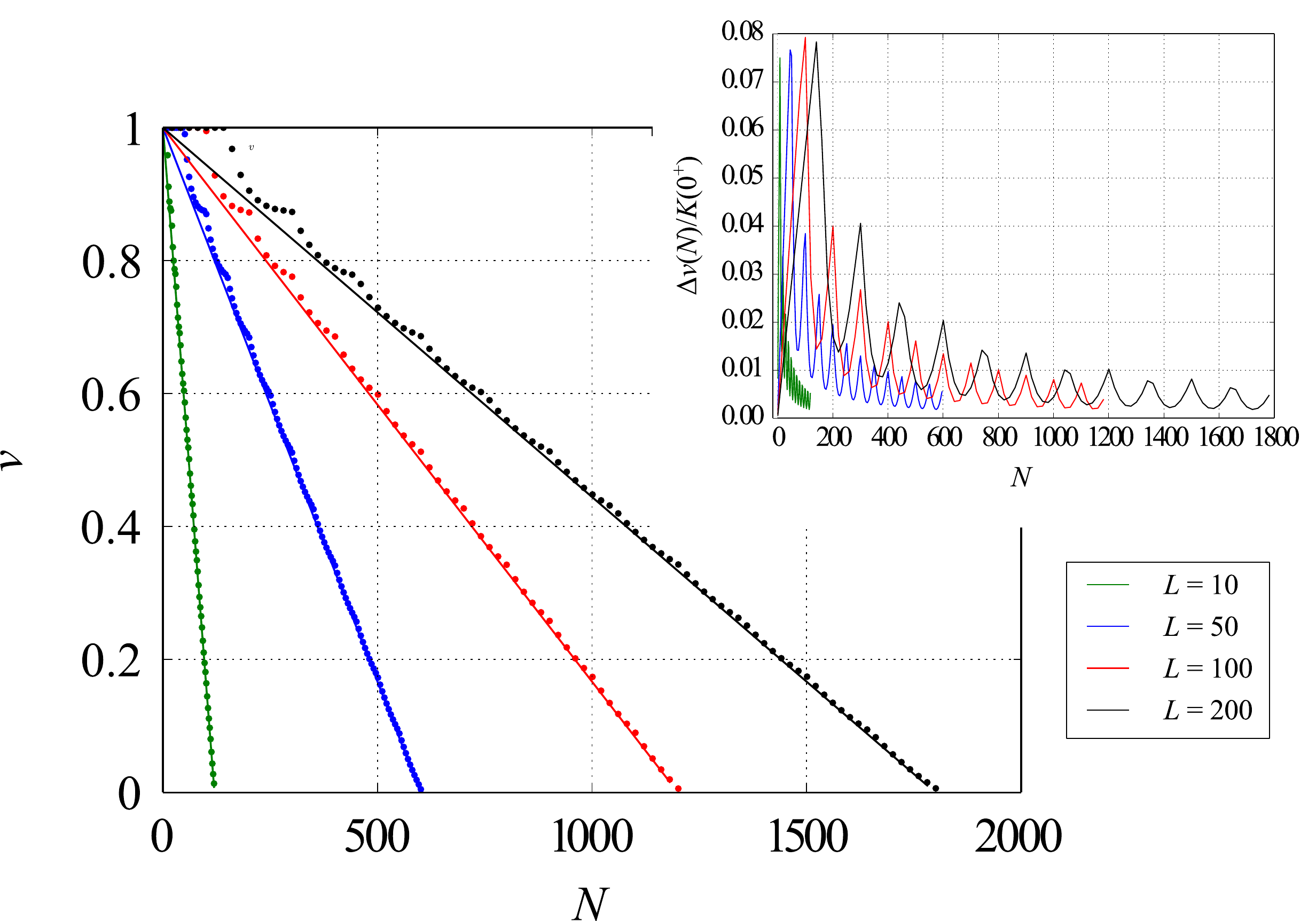}
\caption{Speed diagrams~\eqref{eq:speed-diagr-eps} (discrete model, dots) and~\eqref{eq:speed-diagr-rho} (continuous model, solid lines) obtained, for different values of $L$, with $\vd=1$, $K(z)=\frac{1}{2}z(1-z)$. The graph in the upper-right box shows the convergence to $0$ of the quantity $\Delta{v}(N)$ for $N\to\infty$.}
\label{fig:speed_diag-conv}
\end{figure}

Additionally, from Theorem~\ref{theo:convergence_stationary} we infer that speed diagrams approaching one another can be obtained if $K(0^+)=K(0)=0$, cf. Fig.~\ref{fig:speed_diag-conv}. This condition however violates the assumption that $K$ is decreasing in $(0,\,R)$, cf. Assumption~\ref{ass:K-properties}\eqref{ass:K-monotone}, which is used to prove the stability and attractiveness of the homogeneous configurations on which speed diagrams are based (cf. Propositions~\ref{prop:stability_micro},~\ref{prop:stability_macro}).

\section{General non-stationary dynamics}
\label{sec:non-stationary}
In this section we consider the Cauchy problem
\begin{equation}
	\begin{cases}
		\partial_t\mu_t+\div(\mu_tv[\mu_t])=0, & t\in (0,\,T],\ x\in\R^d \\
		\mu_0=\bar{\mu},
	\end{cases}
	\label{eq:cauchy-problem}
\end{equation}
where $v$ is given by~\eqref{eq:velocity-transport-eq}, $T>0$ is a certain final time, and $\bar{\mu}$ is a prescribed measure representing, at the proper scale, the initial distribution of the crowd. Using the results in~\cite{cristiani2014BOOK,tosin2011NHM} we can state that, under suitable assumptions encompassing those that we will recall later in Section~\ref{sec:hyp_non-stat}, problem~\eqref{eq:cauchy-problem} admits a unique measure-valued solution $\mu_\cdot\in C([0,\,T];\,\MNone)$ in the weak sense~\eqref{eq:weak-form}, $\MNone$ being the space of positive measures on $\R^d$ with total mass $N$ and finite first order moment. Moreover, such a solution preserves the structure of the initial datum: if $\bar{\mu}$ is discrete, respectively continuous, then so is $\mu_t$ for all $t\in (0,\,T]$ (in the continuous case, this is true under the further condition $\Lip(v)Te^{\Lip(v)T}<1$,~\cite{cristiani2014BOOK}).

It makes thus sense to consider sequences of discrete and continuous initial conditions $\{\bar{\epsilon}^N\}_{N=1}^{\infty},\,\{\bar{\rho}^N\}_{N=1}^{\infty}$, with $\bar{\epsilon}^N,\,\bar{\rho}^N\in\MNone\ \forall\,N\geq 1$, to which there correspond sequences of solutions at the same scales $\{\epsilon^N_\cdot\}_{N=1}^{\infty},\,\{\rho^N_\cdot\}_{N=1}^{\infty}$, with $\epsilon^N_\cdot,\,\rho^N_\cdot\in C([0,\,T];\,\MNone)\ \forall\,N\geq 1$, and to compare them ``$N$-by-$N$'' in order to determine when mutually approaching initial measures, i.e.,
\begin{equation}
	\lim_{N\to\infty}\operatorname{d}(\bar{\epsilon}^N,\,\bar{\rho}^N)=0
	\label{eq:approach_init}
\end{equation}
for some metric $\operatorname{d}$, generate mutually approaching solutions, i.e.,
\begin{equation}
	\lim_{N\to\infty}\operatorname{d}(\epsilon^N_t,\,\rho^N_t)=0, \qquad \forall\,t\in (0,\,T].
	\label{eq:approach_sol}
\end{equation}

Formally speaking, we will operate in the setting of the $1$-Wasserstein distance $W_1$, whose definition is as follows:
\begin{equation}
	W_1(\mu,\,\nu)=\inf_{\pi\in\Pi(\mu,\,\nu)}\int_{\R^d\times\R^d}\abs{x-y}\,d\pi(x,\,y),
		\qquad \mu,\nu\in\MNone,
	\label{eq:W1}
\end{equation}
where $\Pi(\mu,\,\nu)$ is the set of all transference plans between the measures $\mu$ and $\nu$, i.e., every $\pi\in\Pi(\mu,\,\nu)$ is a measure on the product space $\R^d\times\R^d$ with marginals $\mu$, $\nu$: $\pi(E\times\R^d)=\mu(E)$, $\pi(\R^d\times E)=\nu(E)$ for every $E\in\cB(\R^d)$. By Kantorovich duality, cf. e.g.,~\cite{villani2009BOOK}, $W_1$ admits also the representation
\begin{equation}
	W_1(\mu,\,\nu)=\sup_{\phi\in\Lipone}\int_{\R^d}\phi\,d(\nu-\mu),
	\label{eq:W1_dual}
\end{equation}
where $\Lipone$ is the space of Lipschitz continuous functions $\phi:\R^d\to\R$ with at most unitary Lipschitz constant. We will use indifferently either expression of $W_1$ depending on the context.

In the following, from Section~\ref{sec:hyp_non-stat} to Section~\ref{sec:scaling}, we recall general results about the solution to~\eqref{eq:cauchy-problem} independently of the geometric structure of the measure. Such results will allow us to discuss, later in Section~\ref{sec:back_discr_cont}, the limits~\eqref{eq:approach_init}-\eqref{eq:approach_sol} previously introduced. 

\subsection{Modeling hypotheses}
\label{sec:hyp_non-stat}
Following the theory developed in~\cite{cristiani2014BOOK,piccoli2013AAM,tosin2011NHM}, we assume some smoothness of the transport velocity. Specifically:

\begin{assumption}[Lipschitz continuity of $v$]
There exist $\Lip(\vd),\,\Lip(K)>0$ such that
\[ \abs{\vd(y)-\vd(x)}\leq\Lip(\vd)\abs{y-x}, \quad \abs{K(y)-K(x)}\leq\Lip(K)\abs{y-x} \]
for all $x,\,y\in\R^d$.
\label{ass:v}
\end{assumption}

Using the expression~\eqref{eq:velocity-transport-eq} of the transport velocity, it is immediate to check that Assumption~\ref{ass:v} implies
\[ \abs{v[\nu](y)-v[\mu](x)}\leq (\Lip(\vd)+N\Lip(K))\abs{y-x}+\Lip(K)W_1(\mu,\,\nu) \]
for all $x,\,y\in\R^d$ and $\mu,\,\nu\in\MNone$, hence letting
\begin{equation}
	\xi^N:=2\max\{\Lip(\vd),\,N\Lip(K)\}
	\label{eq:xi-N}
\end{equation} 
and recalling that we are considering $N\geq 1$ we finally have
\begin{equation}
	\abs{v[\nu](y)-v[\mu](x)}\leq \xi^N\left(\abs{y-x}+\frac{1}{N}W_1(\mu,\,\nu)\right).
	\label{eq:Lip_v}
\end{equation}
 
\subsection{Continuous dependence on the initial datum}
The basic tool for the subsequent analysis is the continuous dependence of the solution to~\eqref{eq:cauchy-problem} on the initial datum, which can be proved using the representation formula~\eqref{eq:pushfwd} based on the flow map~\eqref{eq:def-flow-map}. The result itself and the analytical technique to obtain it are classical in the theory of problem~\eqref{eq:cauchy-problem}, see e.g.,~\cite{dobrushin1979FAA,golse2003JEDP,ha2009CMS}. They have also been extensively used to analyze swarming models, see e.g.,~\cite{canizo2011M3AS} and the thorough review~\cite{carrillo2014CISM}. However, in the present case it is crucial to obtain the explicit dependence on $N$ of some constants appearing in the final estimates, which is less classical due to the fact that here the total mass of the system is not $1$ but  $N$. For this reason, in order to make the paper self-contained, we detail in Appendix~\ref{app:proofs} the proofs of the  following two  results:

\begin{lemma}[Regularity of the flow map]\hfill
\begin{enumerate}[(i)]
\item \label{lemma:gamma_Lip} For all $x,\,y\in\R^d$ and $t\in [0,\,T]$ it results
\[ \abs{\gamma_t(y)-\gamma_t(x)}\leq e^{\xi^Nt}\abs{y-x}. \]
\item \label{lemma:gamma_mu-nu} Let $\mu_\cdot,\,\nu_\cdot\in C([0,\,T];\,\MNone)$ be two solutions to~\eqref{eq:cauchy-problem} with respective initial conditions $\bar{\mu},\,\bar{\nu}\in\MNone$. Call $\gamma^\mu$, $\gamma^\nu$ the flow maps associated to either solution. Then for all $x\in\R^d$ and $t\in [0,\,T]$ it results
\[ \abs{\gamma^\nu_t(x)-\gamma^\mu_t(x)}\leq\frac{\xi^N e^{\xi^Nt}}{N}\int_0^t W_1(\mu_s,\,\nu_s)\,ds. \] 
\end{enumerate}
\label{lemma:reg_gamma}
\end{lemma}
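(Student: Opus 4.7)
The plan for both parts is to work directly from the defining integral equation~\eqref{eq:def-flow-map} for the flow map, apply the Lipschitz estimate~\eqref{eq:Lip_v}, and close the resulting integral inequalities with Gronwall's lemma. Since these are two essentially routine Gronwall arguments, I do not expect any serious obstacles; the only subtlety is keeping track of when the Wasserstein term on the right of~\eqref{eq:Lip_v} contributes and when it vanishes.

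For part~\eqref{lemma:gamma_Lip}, I would fix the single solution $\mu_\cdot$ and subtract the representations $\gamma_t(y)=y+\int_0^t v[\mu_s](\gamma_s(y))\,ds$ and $\gamma_t(x)=x+\int_0^t v[\mu_s](\gamma_s(x))\,ds$. Applying~\eqref{eq:Lip_v} with the \emph{same} measure $\mu=\nu=\mu_s$ makes the $W_1(\mu_s,\mu_s)=0$ term drop, leaving
\[
\abs{\gamma_t(y)-\gamma_t(x)}\leq \abs{y-x}+\xi^N\int_0^t \abs{\gamma_s(y)-\gamma_s(x)}\,ds.
\]
Gronwall's inequality in integral form then yields $\abs{\gamma_t(y)-\gamma_t(x)}\leq e^{\xi^N t}\abs{y-x}$.

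For part~\eqref{lemma:gamma_mu-nu}, I would apply the same strategy with two different measures. Subtracting $\gamma^\nu_t(x)=x+\int_0^t v[\nu_s](\gamma^\nu_s(x))\,ds$ from $\gamma^\mu_t(x)=x+\int_0^t v[\mu_s](\gamma^\mu_s(x))\,ds$ cancels the $x$ and leaves a purely integral inequality. Now~\eqref{eq:Lip_v} contributes both a spatial piece $\xi^N\abs{\gamma^\nu_s(x)-\gamma^\mu_s(x)}$ and the Wasserstein piece $\tfrac{\xi^N}{N}W_1(\mu_s,\nu_s)$, giving
\[
\abs{\gamma^\nu_t(x)-\gamma^\mu_t(x)}\leq \frac{\xi^N}{N}\int_0^t W_1(\mu_s,\nu_s)\,ds+\xi^N\int_0^t\abs{\gamma^\nu_s(x)-\gamma^\mu_s(x)}\,ds.
\]
The forcing term $a(t):=\tfrac{\xi^N}{N}\int_0^t W_1(\mu_s,\nu_s)\,ds$ is non-decreasing in $t$, so the generalized Gronwall lemma ($f(t)\leq a(t)+b\int_0^t f(s)\,ds$ with non-decreasing $a$ implies $f(t)\leq a(t)e^{bt}$) yields exactly the claimed bound
\[
\abs{\gamma^\nu_t(x)-\gamma^\mu_t(x)}\leq \frac{\xi^N e^{\xi^N t}}{N}\int_0^t W_1(\mu_s,\nu_s)\,ds.
\]

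The only thing worth double-checking is that the flow maps $\gamma^\mu$, $\gamma^\nu$ are well-defined and satisfy~\eqref{eq:def-flow-map} under the hypotheses stated at the beginning of Section~\ref{sec:non-stationary}; this is guaranteed by the cited existence/regularity theory from~\cite{cristiani2014BOOK,piccoli2013AAM,tosin2011NHM}, so it may be invoked without further comment.
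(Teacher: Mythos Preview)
Your proposal is correct and follows essentially the same route as the paper: subtract the integral representations~\eqref{eq:def-flow-map}, apply the Lipschitz estimate~\eqref{eq:Lip_v} (with the $W_1$ term vanishing in part~\eqref{lemma:gamma_Lip} and surviving in part~\eqref{lemma:gamma_mu-nu}), and close with Gronwall. Your explicit invocation of the non-decreasing-forcing form of Gronwall in part~\eqref{lemma:gamma_mu-nu} is a helpful clarification that the paper leaves implicit.
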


\begin{proposition}[Continuous dependence]
Let $\mu_\cdot,\,\nu_\cdot\in C([0,\,T];\,\MNone)$ be two solutions to~\eqref{eq:cauchy-problem} corresponding to initial conditions $\bar{\mu},\,\bar{\nu}\in\MNone$. Then
\[ W_1(\mu_t,\,\nu_t)\leq e^{\xi^Nt\left(1+e^{\xi^NT}\right)}W_1(\bar{\mu},\,\bar{\nu}),
	\qquad \forall\,t\in (0,\,T]. \]
\label{prop:cont_dep}
\end{proposition}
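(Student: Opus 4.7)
The plan is to exploit the push-forward representation $\mu_t=\gamma^\mu_t\#\bar\mu$, $\nu_t=\gamma^\nu_t\#\bar\nu$ from~\eqref{eq:pushfwd}--\eqref{eq:def-flow-map}, and then invoke Lemma~\ref{lemma:reg_gamma} to reduce everything to a Gronwall-type inequality in the unknown $t\mapsto W_1(\mu_t,\nu_t)$.

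First, I would pick an optimal transference plan $\bar\pi\in\Pi(\bar\mu,\bar\nu)$ for $W_1(\bar\mu,\bar\nu)$ and observe that the image measure $(\gamma^\mu_t,\gamma^\nu_t)\#\bar\pi$ is an admissible plan between $\mu_t$ and $\nu_t$. Hence, by the primal definition~\eqref{eq:W1},
\[
W_1(\mu_t,\nu_t)\le\int_{\R^d\times\R^d}\bigl|\gamma^\mu_t(x)-\gamma^\nu_t(y)\bigr|\,d\bar\pi(x,y).
\]
Splitting by the triangle inequality,
\[
\bigl|\gamma^\mu_t(x)-\gamma^\nu_t(y)\bigr|\le\bigl|\gamma^\mu_t(x)-\gamma^\mu_t(y)\bigr|+\bigl|\gamma^\mu_t(y)-\gamma^\nu_t(y)\bigr|,
\]
I would estimate the first summand by Lemma~\ref{lemma:reg_gamma}\eqref{lemma:gamma_Lip}, which after integration against $\bar\pi$ yields $e^{\xi^N t}W_1(\bar\mu,\bar\nu)$ using optimality of $\bar\pi$.

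For the second summand, note that it depends only on $y$, so integration against $\bar\pi$ collapses to integration against its second marginal $\bar\nu$. Applying Lemma~\ref{lemma:reg_gamma}\eqref{lemma:gamma_mu-nu} uniformly in $y$ gives a bound by $\tfrac{\xi^N e^{\xi^N t}}{N}\int_0^t W_1(\mu_s,\nu_s)\,ds$; since $\bar\nu(\R^d)=N$, the $1/N$ cancels against the total mass, producing exactly $\xi^N e^{\xi^N t}\int_0^t W_1(\mu_s,\nu_s)\,ds$. Combining both pieces,
\[
W_1(\mu_t,\nu_t)\le e^{\xi^N t}W_1(\bar\mu,\bar\nu)+\xi^N e^{\xi^N t}\int_0^t W_1(\mu_s,\nu_s)\,ds.
\]

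Finally, I would crudely bound $e^{\xi^N t}\le e^{\xi^N T}$ inside the integral factor (leaving the leading $e^{\xi^N t}$ untouched) and invoke the integral form of Gronwall's lemma with nondecreasing source $A(t)=e^{\xi^N t}W_1(\bar\mu,\bar\nu)$ and constant multiplier $B=\xi^N e^{\xi^N T}$. This delivers $W_1(\mu_t,\nu_t)\le A(t)e^{Bt}=e^{\xi^N t(1+e^{\xi^N T})}W_1(\bar\mu,\bar\nu)$, which is the stated bound. The only mildly subtle step is the bookkeeping of the factor $N$ in the second integrand: one must remember that $\bar\nu$ has total mass $N$, not $1$, so that the $1/N$ in Lemma~\ref{lemma:reg_gamma}\eqref{lemma:gamma_mu-nu} is exactly what is needed to keep the estimate scale-consistent. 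Everything else is a routine application of Gronwall.
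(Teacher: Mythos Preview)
Your proof is correct and follows the same high-level strategy as the paper: use the push-forward representation, apply both parts of Lemma~\ref{lemma:reg_gamma}, arrive at the integral inequality
\[
W_1(\mu_t,\nu_t)\le e^{\xi^N t}W_1(\bar\mu,\bar\nu)+\xi^N e^{\xi^N T}\int_0^t W_1(\mu_s,\nu_s)\,ds,
\]
and finish with Gronwall. The one genuine difference is the representation of $W_1$ used to reach this inequality. The paper works on the dual side~\eqref{eq:W1_dual}: it fixes $\phi\in\Lipone$, adds and subtracts $\int\phi(\gamma^\mu_t(x))\,d\bar\nu(x)$, bounds one piece using that $\phi\circ\gamma^\mu_t$ has Lipschitz constant $e^{\xi^Nt}$ (Lemma~\ref{lemma:reg_gamma}\eqref{lemma:gamma_Lip}) and the other using Lemma~\ref{lemma:reg_gamma}\eqref{lemma:gamma_mu-nu}, then takes the supremum over $\phi$. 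You work on the primal side~\eqref{eq:W1}: you push an optimal plan $\bar\pi$ forward by $(\gamma^\mu_t,\gamma^\nu_t)$ and split via the triangle inequality. Both routes are standard and yield exactly the same intermediate bound; yours is arguably slightly more direct since it avoids the final supremum, while the paper's dual argument does not require invoking existence of an optimal coupling. Your observation about the cancellation of the factor $1/N$ against $\bar\nu(\R^d)=N$ is precisely what happens in the paper as well (there it appears when integrating the uniform bound from Lemma~\ref{lemma:reg_gamma}\eqref{lemma:gamma_mu-nu} against $d\bar\nu$).
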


\subsection{Sequences of measures with growing mass: scaling of the interactions}
\label{sec:scaling}
Let us now consider two sequences of initial conditions of growing mass, say $\{\bar{\mu}^N\}_{N=1}^{\infty},\,\{\bar{\nu}^N\}_{N=1}^{\infty}$ with $\bar{\mu}^N,\,\bar{\nu}^N\in\MNone\ \forall\,N\geq 1$, and the related sequences of solutions to~\eqref{eq:cauchy-problem}, $\{\mu^N_\cdot\}_{N=1}^{\infty},\,\{\nu^N_\cdot\}_{N=1}^{\infty}$ with $\mu^N_\cdot,\,\nu^N_\cdot\in C([0,\,T];\,\MNone)\ \forall\,N\geq 1$. From Proposition~\ref{prop:cont_dep} we have
\begin{equation}
	W_1(\mu^N_t,\,\nu^N_t)\leq e^{\xi^Nt\left(1+e^{\xi^NT}\right)}W_1(\bar{\mu}^N,\,\bar{\nu}^N),
	\label{eq:apriori_N}
\end{equation}
where the exponential factor estimates the amplification at time $t>0$ of the distance between the initial data. If $N$ is sufficiently large then from~\eqref{eq:xi-N} we have $\xi^N=2N\Lip(K)$. In order for the exponential factor to remain bounded for growing $N$ and ensure that $W_1(\bar{\mu}^N,\,\bar{\nu}^N)$ and $W_1(\mu^N_t,\,\nu^N_t)$ are of the same order of magnitude for all $N$, it is necessary that
\begin{equation}
	\Lip(K)=O(N^{-1}) \quad \text{for} \quad N\to\infty.
	\label{eq:LipK_N}
\end{equation}
In other words, we have to suitably scale  the interaction kernel 
%
with respect to $N$. In particular, given a Lipschitz continuous function $\cK:\R^d\to\R^d$ compactly supported in $B_R(0)\subset\R^d$, to comply with~\eqref{eq:LipK_N} we consider the following two-parameter family of interaction kernels:
\begin{equation}
	K(z)=K^N_{\alpha,\beta}(z):=\frac{1}{N^\alpha}\cK\left(\frac{z}{N^\beta}\right), \qquad \alpha,\,\beta\in\R,
	\label{eq:KN}
\end{equation}
whose Lipschitz constant is
\[ \Lip(K^N_{\alpha,\beta})=\frac{\Lip(\cK)}{N^{\alpha+\beta}}. \] 
Clearly, they satisfy~\eqref{eq:LipK_N} as long as
\begin{equation}
	\alpha+\beta\geq 1.
	\label{eq:alpha.beta}
\end{equation}

\begin{example}[Role of $\alpha$]\label{ex:alpha}
An admissible interaction kernel in the family~\eqref{eq:KN} is  obtained for $\alpha=1$, $\beta=0$, i.e.,
\[ K^N_{1,0}(z)=\frac{1}{N}\cK(z). \]
This corresponds to a decreasing interpersonal repulsion when the number of pedestrians increases. Notice that this is the same scaling adopted in the mean field limit~\cite{carrillo2009KRM,carrillo2010MSSET}. In this case, pedestrian velocity~\eqref{eq:velocity-transport-eq} reads
\[ v[\mu^N_t](x)=\vd(x)-\frac{1}{N}\int_{\R^d}\cK(y-x)\,d\mu^N_t(y). \]
Considering that $\mu^N_t(\R^d)=N$, the desired velocity and the interactions have commensurable weights for every $N$.
\end{example}

\begin{example}[Role of $\beta$]\label{ex:beta}
An interaction kernel somehow opposite to $K^N_{1,0}$    (cf. Example~\ref{ex:alpha}) is obtained for $\alpha=0$, $\beta=1$, i.e.,
\[ K^N_{0,1}(z)=\cK\left(\frac{z}{N}\right), \]
whose support is contained in the ball $B_{NR}(0)$. This kernel corresponds to pedestrians who interact with
an increasing number of
 individuals as the total number of people increases. The resulting velocity:
\[ v[\mu^N_t](x)=\vd(x)-\int_{\R^d}\cK\left(\frac{y-x}{N}\right)\,d\mu^N_t(y) \]
is such that the component due to interactions tends to predominate over the desired one for growing $N$.
\end{example}

\begin{example}
Besides the extreme cases in Examples~\ref{ex:alpha},~\ref{ex:beta}, we may also consider intermediate cases in which both $\alpha$ and $\beta$ are simultaneously nonzero (and not necessarily positive). For instance, for $\alpha=2$, $\beta=-1$ we get
\[ K^N_{2,-1}(z)=\frac{1}{N^2}\cK(Nz), \]
which corresponds to interactions that weaken and that are restricted to a contracting 
 sensory region (in fact $\supp{K^N_{2,-1}}\subseteq B_{R/N}(0)$) as $N$ grows. This models pedestrians who agree to stay closer and closer as their number increases, like e.g., in highly crowded train or metro stations during rush hours. The resulting velocity:
\[ v[\mu^N_t](x)=\vd(x)-\frac{1}{N^2}\int_{\R^d}\cK(N(y-x))\,d\mu^N_t(y) \]
is such that $\vd$ dominates for large $N$, meaning that at high crowding pedestrians tend to be passively transported by the flow without interacting.
\end{example}

A scaling of  interactions of type~\eqref{eq:KN} is proposed e.g. in~\cite{capasso2009SAA}. However, it involves only one parameter $\gamma\in (0,\,1)$, corresponding to setting  $\alpha=-\gamma$ and $\beta=-\frac{\gamma}{d}$ in~\eqref{eq:LipK_N}. As this condition does not satisfy~\eqref{eq:alpha.beta}, it has to be regarded as a complementary case, not covered by the theory developed here.

\subsubsection{Scaling equivalence}
Solutions to~\eqref{eq:cauchy-problem} with interaction kernel~\eqref{eq:KN} for different values of the parameters $\alpha$, $\beta$ account for different interpersonal attitudes of pedestrians in congested crowd regimes. Hence, we may expect significantly different solutions for large $N$. Nonetheless, in the case $\vd\equiv 0$ a one-to-one correspondence among them exists, up to a transformation of the space variable depending on $N$. In order to prove it we preliminarily introduce the following

\begin{proposition}
Let $U:\R^d\to\R^d$ be the linear scaling of the space
\[ U(z)=az, \qquad a\in\R \]
and let $\hat{v}$, $\tilde{v}$ be the velocities~\eqref{eq:velocity-transport-eq} computed with the following interaction kernels:
\[ \hat{K}(z)=(\cK\circ U^{-1})(z)=\cK\left(\frac{z}{a}\right), \qquad
	\tilde{K}(z)=(U^{-1}\circ\cK)(z)=\frac{1}{a}\cK(z), \]
where $\cK:\R^d\to\R^d$ is Lipschitz continuous. Assume moreover $\vd\equiv 0$.
\begin{enumerate}
\item[(i)] For all $\mu\in\MNone$ it results
\[ \tilde{v}[\mu](x)=\frac{1}{a}\hat{v}[U\#\mu](ax). \]
\end{enumerate}

Let $\hat{\mu}_\cdot,\,\tilde{\mu}_\cdot\in C([0,\,T];\,\MNone)$ be the solutions to~\eqref{eq:cauchy-problem} with velocities $\hat{v}$, $\tilde{v}$, respectively, and initial conditions such that
\[ \hat{\mu}_0=U\#\tilde{\mu}_0. \]
\begin{enumerate}
\item[(ii)] The flow maps $\hat{\gamma}_t$, $\tilde{\gamma}_t$ correspond to one another as
\[ \hat{\gamma}_t=U\circ\tilde{\gamma}_t\circ U^{-1}, \qquad \forall\,t\in (0,\,T]. \]
\item[(iii)] The solutions satisfy
\[ \hat{\mu}_t=U\#\tilde{\mu}_t, \qquad \forall\,t\in (0,\,T]. \]
\end{enumerate}
\label{prop:corr_sol}
\end{proposition}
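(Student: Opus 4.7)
The plan is to derive Part~(i) by a direct change of variables in the pushforward integral, then leverage it together with the uniqueness for~\eqref{eq:cauchy-problem} (recalled at the start of Section~\ref{sec:non-stationary}) to obtain Parts~(ii) and~(iii) in one stroke.

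For Part~(i), since $\vd\equiv 0$, expression~\eqref{eq:velocity-transport-eq} reduces the two sides to
\[ \tilde{v}[\mu](x)=-\frac{1}{a}\int_{\R^d}\cK(y-x)\,d\mu(y),\qquad
    \hat{v}[U\#\mu](ax)=-\int_{\R^d}\cK\!\left(\frac{y-ax}{a}\right)\,d(U\#\mu)(y). \]
Applying the pushforward change of variables $\int f\,d(U\#\mu)=\int f\circ U\,d\mu$ with $f(y)=\cK((y-ax)/a)$ collapses the integrand to $\cK(z-x)$, yielding $\hat{v}[U\#\mu](ax)=-\int_{\R^d}\cK(z-x)\,d\mu(z)=a\tilde{v}[\mu](x)$, which is the claim.

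For Parts~(ii) and~(iii) I would avoid proving them separately and instead introduce the \emph{candidate} flow $\Gamma_t:=U\circ\tilde{\gamma}_t\circ U^{-1}$ and the \emph{candidate} measure-valued solution $\mathcal{M}_t:=U\#\tilde{\mu}_t$. By hypothesis $\mathcal{M}_0=U\#\tilde{\mu}_0=\hat{\mu}_0$, and by functoriality of the pushforward $\mathcal{M}_t=U\#(\tilde{\gamma}_t\#\tilde{\mu}_0)=\Gamma_t\#\hat{\mu}_0$. The crux is to show that $\Gamma_t$ satisfies the flow equation~\eqref{eq:def-flow-map} driven by the velocity $\hat{v}[\mathcal{M}_t]$. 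Starting from $\tilde{\gamma}_t(x/a)=x/a+\int_0^t\tilde{v}[\tilde{\mu}_s](\tilde{\gamma}_s(x/a))\,ds$ and multiplying by $a$, Part~(i) applied with $\mu=\tilde{\mu}_s$ at the argument $\tilde{\gamma}_s(x/a)$ gives
\[ a\,\tilde{v}[\tilde{\mu}_s](\tilde{\gamma}_s(x/a))
    =\hat{v}[U\#\tilde{\mu}_s]\bigl(a\,\tilde{\gamma}_s(x/a)\bigr)
    =\hat{v}[\mathcal{M}_s](\Gamma_s(x)), \]
so that $\Gamma_t(x)=x+\int_0^t\hat{v}[\mathcal{M}_s](\Gamma_s(x))\,ds$. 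Combined with $\mathcal{M}_t=\Gamma_t\#\hat{\mu}_0$, this exhibits $\mathcal{M}_\cdot$ as a solution of~\eqref{eq:cauchy-problem} with velocity $\hat{v}$ and initial datum $\hat{\mu}_0$; uniqueness then forces $\hat{\mu}_t=\mathcal{M}_t$, which is~(iii), and consequently $\hat{\gamma}_t=\Gamma_t$, which is~(ii).

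The main obstacle is purely bookkeeping: one must consistently keep track of where the factor $a$ is absorbed and verify that $\mathcal{M}_\cdot=U\#\tilde{\mu}_\cdot$ actually belongs to $C([0,\,T];\MNone)$, so that the uniqueness statement applies. Both mass conservation and finiteness of the first moment are preserved under the linear bijection $U$ (the first moment is simply rescaled by $\abs{a}$), and time-continuity in $W_1$ is inherited from that of $\tilde{\mu}_\cdot$ via the elementary bound $W_1(U\#\mu,\,U\#\nu)\leq\abs{a}\,W_1(\mu,\,\nu)$, which follows directly from~\eqref{eq:W1} by pushing any transference plan through $U\otimes U$.
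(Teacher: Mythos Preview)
Your proof is correct and follows essentially the same route as the paper: Part~(i) is the identical change-of-variables computation, and for Parts~(ii)--(iii) both you and the paper introduce the candidate flow $\Gamma_t=U\circ\tilde{\gamma}_t\circ U^{-1}$ and verify via Part~(i) that it satisfies the flow equation~\eqref{eq:def-flow-map} driven by $\hat{v}$. The only difference is cosmetic: the paper first identifies $\hat{\gamma}_t=\Gamma_t$ (leaving the appeal to uniqueness implicit) and then deduces~(iii) by pushforward, whereas you package both conclusions through an explicit invocation of the uniqueness statement for~\eqref{eq:cauchy-problem} and take the extra care of checking that $U\#\tilde{\mu}_\cdot\in C([0,T];\MNone)$ so that uniqueness indeed applies.
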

\begin{proof}
\begin{enumerate}[(i)]
\item By direct calculation we find
\begin{align*}
	\frac{1}{a}\hat{v}[U\#\mu](ax) &= -\frac{1}{a}\int_{\R^d}\hat{K}(y-ax)\,d(U\#\mu)(y) \\
	& =-\frac{1}{a}\int_{\R^d}\cK\left(\frac{y-ax}{a}\right)\,d(U\#\mu)(y) \\
	& =-\frac{1}{a}\int_{\R^d}\cK(z-x)\,d\mu(z) \\
	& =-\int_{\R^d}\tilde{K}(z-x)\,d\mu(z)=\tilde{v}[\mu](x).
\end{align*}
\item We check that
\[ \hat{\gamma}_t(x):=(U\circ\tilde{\gamma}_t\circ U^{-1})(x)=a\tilde{\gamma}_t\left(\frac{x}{a}\right) \]
complies with the definition~\eqref{eq:def-flow-map}. Using~\eqref{eq:def-flow-map} for $\tilde{\gamma}_t$, we write
\[ \hat{\gamma}_t(x)=x+a\int_0^t\tilde{v}[\tilde{\gamma}_s\#\tilde{\mu}_0]\left(\tilde{\gamma}_s\left(\frac{x}{a}\right)\right)\,ds. \]
Next we observe that
\[ \tilde{\gamma}_s\#\tilde{\mu}_0=(\tilde{\gamma}_s\circ U^{-1}\circ U)\#\tilde{\mu}_0
	=(\tilde{\gamma}_s\circ U^{-1})\#\hat{\mu}_0, \]
hence from the previous point (i) we deduce
\begin{align*}
	\hat{\gamma}_t(x) &= x+a\int_0^t\frac{1}{a}\hat{v}[U\#(\tilde{\gamma}_s\circ U^{-1})\#\hat{\mu}_0]\left(a\tilde{\gamma}_s\left(\frac{x}{a}\right)\right)\,ds \\
	& =x+\int_0^t\hat{v}[\hat{\gamma}_s\#\hat{\mu}_0](\hat{\gamma}_s(x))\,ds,
\end{align*}
as desired.
\item Due to the result in (ii) we have
\[ \hat{\mu}_t=\hat{\gamma}_t\#\hat{\mu}_0=(U\circ\tilde{\gamma}_t\circ U^{-1})\#(U\#\tilde{\mu}_0)
	=(U\circ\tilde{\gamma}_t)\#\tilde{\mu}_0=U\#\tilde{\mu}_t. \qedhere \]
\end{enumerate}
\end{proof}

As a consequence of Proposition~\ref{prop:corr_sol}, we can prove a correspondence among the dynamics governed by different interaction kernels of the family~\eqref{eq:KN}.

\begin{theorem}[Scaling equivalence]
Let $\mu^N_\cdot,\,\nu^N_\cdot\in C([0,\,T];\,\MNone)$ be the solutions to~\eqref{eq:cauchy-problem} corresponding to interaction kernels $K^N_{\alpha,\beta}$, $K^N_{\alpha',\beta'}$, with
\[ \alpha+\beta=\alpha'+\beta', \]
and to initial conditions $\bar{\mu}^N,\,\bar{\nu}^N=U^N\#\bar{\mu}^N\in\MNone$, respectively, where
\[ U^N(z)=N^{\beta'-\beta}z. \]
Then
\[ \nu^N_t=U^N\#\mu^N_t, \qquad \forall\,t\in (0,\,T]. \]
\end{theorem}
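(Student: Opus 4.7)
The plan is to recognize the statement as a direct corollary of Proposition~\ref{prop:corr_sol} once the two kernels $K^N_{\alpha,\beta}$ and $K^N_{\alpha',\beta'}$ are placed in the correspondence $\tilde K,\,\hat K$ featured there. Set $a:=N^{\beta'-\beta}$, so that the scaling of the statement is exactly $U^N(z)=az$, and introduce as underlying base kernel for the proposition
\[ \cK_0(z):=aK^N_{\alpha,\beta}(z)=N^{\beta'-\beta-\alpha}\cK\!\left(\frac{z}{N^\beta}\right). \]
By construction $\tilde K(z):=\cK_0(z)/a$ coincides with $K^N_{\alpha,\beta}$; hence, with initial datum $\tilde\mu_0=\bar\mu^N$, the proposition's $\tilde\mu_\cdot$ solves~\eqref{eq:cauchy-problem} with the velocity generated by $K^N_{\alpha,\beta}$, so by uniqueness (guaranteed under Assumption~\ref{ass:v}) it coincides with $\mu^N_\cdot$.

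The key algebraic verification is that $\hat K(z):=\cK_0(z/a)$ coincides with $K^N_{\alpha',\beta'}$. A direct computation yields
\[ \hat K(z)=\cK_0\!\left(\frac{z}{a}\right)=N^{\beta'-\beta-\alpha}\cK\!\left(\frac{z}{aN^\beta}\right)=N^{\beta'-\beta-\alpha}\cK\!\left(\frac{z}{N^{\beta'}}\right), \]
which matches $N^{-\alpha'}\cK(z/N^{\beta'})=K^N_{\alpha',\beta'}(z)$ precisely because the hypothesis $\alpha+\beta=\alpha'+\beta'$ is equivalent to $\beta'-\beta-\alpha=-\alpha'$. Consequently the proposition's $\hat\mu_\cdot$, with initial datum $\hat\mu_0=U^N\#\bar\mu^N=\bar\nu^N$, is the solution generated by $K^N_{\alpha',\beta'}$ starting from $\bar\nu^N$, i.e.\ $\hat\mu_\cdot=\nu^N_\cdot$.

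With these identifications in place, Proposition~\ref{prop:corr_sol}(iii) gives $\hat\mu_t=U^N\#\tilde\mu_t$ for every $t\in(0,T]$, which is exactly the desired $\nu^N_t=U^N\#\mu^N_t$. There is no real analytic obstacle here beyond the bookkeeping of exponents of $N$ in the previous display and the appeal to uniqueness to pair $\mu^N_\cdot,\nu^N_\cdot$ with the measures produced by the proposition; the content of the theorem is entirely carried by the constraint $\alpha+\beta=\alpha'+\beta'$, which is what makes $\tilde K$ and $\hat K$ simultaneously express the same base kernel $\cK_0$.
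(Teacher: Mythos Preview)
Your proof is correct and follows essentially the same approach as the paper: you identify the common base kernel $\cK_0(z)=N^{-\alpha'}\cK(z/N^\beta)$ (which is exactly your $aK^N_{\alpha,\beta}$ once $\beta'-\beta-\alpha=-\alpha'$), verify that $\tilde K=K^N_{\alpha,\beta}$ and $\hat K=K^N_{\alpha',\beta'}$ for the scaling $a=N^{\beta'-\beta}$, and then invoke Proposition~\ref{prop:corr_sol}(iii). The only addition on your side is making explicit the appeal to uniqueness of solutions to match $\tilde\mu_\cdot,\hat\mu_\cdot$ with $\mu^N_\cdot,\nu^N_\cdot$, which the paper leaves implicit.
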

\begin{proof}
By~\eqref{eq:KN} we have, on the one hand,
\[ K^N_{\alpha,\beta}(z)=\frac{1}{N^\alpha}\cK\left(\frac{z}{N^\beta}\right)=
	\frac{1}{N^{\beta'-\beta}}\cdot\frac{1}{N^{\alpha'}}\cK\left(\frac{z}{N^\beta}\right), \]
where we have used the fact that $\alpha=\alpha'+\beta'-\beta$, and, on the other hand,
\[ K^N_{\alpha',\beta'}(z)=\frac{1}{N^{\alpha'}}\cK\left(\frac{z}{N^{\beta'}}\right)=
	\frac{1}{N^{\alpha'}}\cK\left(\frac{1}{N^{\beta'-\beta}}\cdot\frac{z}{N^\beta}\right). \]
Thus:
\begin{align*}
	K^N_{\alpha,\beta}(z) &= \frac{1}{a}\cdot\frac{1}{N^{\alpha'}}\cK\left(\frac{z}{N^\beta}\right) \\
	K^N_{\alpha',\beta'}(z) &= \frac{1}{N^{\alpha'}}\cK\left(\frac{1}{a}\cdot\frac{z}{N^\beta}\right)
\end{align*}
for $a=N^{\beta'-\beta}$ and the thesis follows from Proposition~\ref{prop:corr_sol}.
\end{proof}

\subsection{Back to discrete and continuous models}
\label{sec:back_discr_cont}
Conditions~\eqref{eq:KN}-\eqref{eq:alpha.beta} imply
\[ \xi^N\leq\xi^\ast:=2\max\{\Lip(\vd),\,\Lip(\cK)\}, \qquad \forall\,N\geq 1. \]
When using~\eqref{eq:apriori_N} for sequences of discrete and continuous measures we can incorporate this fact and write
\begin{equation}
	W_1(\epsilon^N_t,\,\rho^N_t)\leq e^{\xi^\ast t\left(1+e^{\xi^\ast T}\right)}W_1(\bar{\epsilon}^N,\,\bar{\rho}^N),
		\qquad \forall\,t\in (0,\,T],\ \forall\,N\geq 1.
	\label{eq:W1.discr-cont}
\end{equation}
Hence mutually approaching sequences of discrete and continuous solutions to~\eqref{eq:cauchy-problem}, cf.~\eqref{eq:approach_sol}, are possible, provided one is able to construct mutually approaching sequences of initial conditions at the corresponding scales, cf.~\eqref{eq:approach_init}. In the following we discuss a possible procedure leading to the desired result.

Let $\bar{X}^1,\,\dots,\,\bar{X}^N\in\R^d$ be the initial positions of $N$ distinct microscopic pedestrians. The associated discrete distribution $\bar{\epsilon}^N$ is constructed from~\eqref{eq:construction-of-meas-epsilon}, while, given $r>0$, we define
\begin{equation}
	\bar{\rho}^N=\sum_{i=1}^{N}\rho^i, \qquad
		\rho^i(x)=\frac{1}{r^d}f\left(\frac{x-\bar{X}^i}{r}\right),
	\label{eq:init.rhoN}
\end{equation}
where $f:\R^d\to\R$ is a nonnegative function such that
\begin{equation}
	\supp{f}\subseteq\overline{B_1(0)}, \qquad \int_{B_1(0)}f(x)\,dx=1.
	\label{eq:f}
\end{equation}
Consequently, $\supp{\rho^i}\subseteq\overline{B_r(\bar{X}^i)}$ and moreover $\rho^i(B_r(\bar{X}^i))=1$ each $i$, thus $\bar{\rho}^N$ is the superposition of $N$ piecewise constant density bumps, each of which carries a unit mass representative of one pedestrian. We assume 
\begin{equation}
	r<\frac{1}{2}\min_{i\ne j}\abs{\bar{X}^j-\bar{X}^i}
	\label{eq:disc-min-radius}
\end{equation}
which ensures no overlapping of the supports of the $\rho^i$'s.

In the remaining part of this section we compute the Wasserstein distance between $\bar{\epsilon}^N$ and $\bar{\rho}^N$ and we study its trend with respect to $N$.

\begin{proposition}[Distance between the initial conditions]
Let
\[ m_f:=\int_{B_1(0)}\abs{x}f(x)\,dx. \]
The distance between $\bar\epsilon^N$ and $\bar\rho^N$ is
\[ W_1(\bar{\epsilon}^N,\,\bar{\rho}^N)=m_fNr. \]
\label{prop:W1.init}
\end{proposition}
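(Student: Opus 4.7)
The plan is to match an explicit upper bound, coming from a transport plan, with an equal lower bound coming from the Kantorovich duality \eqref{eq:W1_dual}. Because condition \eqref{eq:disc-min-radius} forces the balls $\overline{B_r(\bar{X}^i)}$ to be pairwise disjoint, the global transport problem essentially decouples into $N$ independent local problems of moving the atom $\delta_{\bar{X}^i}$ onto the bump $\rho^i$, and each local problem costs exactly $r m_f$ by a trivial scaling argument.

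For the upper bound, I would use the transport plan
\[
	\pi := \sum_{i=1}^{N}\delta_{\bar{X}^i}\otimes\rho^i,
\]
which lies in $\Pi(\bar{\epsilon}^N,\bar{\rho}^N)$ since each $\rho^i$ carries unit mass. The cost then splits as $\sum_i\int_{\R^d}|\bar{X}^i-y|\,d\rho^i(y)$, and the substitution $z=(y-\bar{X}^i)/r$ reduces each summand to $r\int_{B_1(0)}|z|f(z)\,dz = rm_f$, yielding $W_1(\bar{\epsilon}^N,\bar{\rho}^N)\le m_fNr$.

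For the lower bound, I would test \eqref{eq:W1_dual} against
\[
	\phi(x) := \min_{1\le i\le N}\abs{x-\bar{X}^i},
\]
which is $1$-Lipschitz as an infimum of $1$-Lipschitz functions and vanishes at every atom, so $\int\phi\,d\bar{\epsilon}^N=0$. The key observation is that, by \eqref{eq:disc-min-radius}, for every $x\in B_r(\bar{X}^i)$ and every $j\ne i$ one has $\abs{x-\bar{X}^j}\ge \abs{\bar{X}^j-\bar{X}^i}-\abs{x-\bar{X}^i}>2r-r>\abs{x-\bar{X}^i}$, so $\phi$ coincides with $\abs{x-\bar{X}^i}$ throughout $\supp\rho^i$. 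The same change of variable as above then gives $\int\phi\,d\bar{\rho}^N = m_fNr$, whence $W_1(\bar{\epsilon}^N,\bar{\rho}^N)\ge m_fNr$.

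I do not foresee any real obstacle: the argument is essentially a check of marginals plus an elementary rescaling. The only point demanding care is the pointwise identification $\phi(x)=\abs{x-\bar{X}^i}$ on each support $\overline{B_r(\bar{X}^i)}$, which is exactly where the separation hypothesis \eqref{eq:disc-min-radius} is essential; without it, $\phi$ could pick a different nearest atom inside some bump and the lower bound would be strictly smaller. Matching the two bounds produces the claimed equality $W_1(\bar{\epsilon}^N,\bar{\rho}^N)=m_fNr$.
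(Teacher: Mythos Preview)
Your proof is correct. The upper bound via the explicit coupling $\pi=\sum_i\delta_{\bar X^i}\otimes\rho^i$ is exactly what the paper does as well. The lower bound, however, is obtained differently: the paper stays in the primal formulation~\eqref{eq:W1} and shows that every transference plan of the semi-discrete form $\pi(x,y)=\bar\rho^N(x)\otimes\sum_j\alpha_j(x)\delta_{\bar X^j}(y)$ has cost at least $\sum_i\int_{B_r(\bar X^i)}\abs{x-\bar X^i}\,d\rho^i(x)$, using~\eqref{eq:disc-min-radius} to bound the convex combination $\sum_j\alpha_j(x)\abs{x-\bar X^j}$ from below by $\abs{x-\bar X^i}$ on each $B_r(\bar X^i)$. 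Your argument instead exploits the dual formulation~\eqref{eq:W1_dual} with the single test function $\phi(x)=\min_i\abs{x-\bar X^i}$; the separation hypothesis enters at the same place, namely to force $\phi\equiv\abs{\cdot-\bar X^i}$ on $\supp\rho^i$. Your route is shorter and avoids having to argue that the semi-discrete ansatz exhausts all couplings; the paper's route, on the other hand, explicitly identifies the optimal transference plan, which is informative in its own right.
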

\begin{proof}
Here we use the expression~\eqref{eq:W1_dual} of $W_1$. Let us consider the case $N=1$ first. Since $\bar{\epsilon}^1$ is a single Dirac mass, there is no ambiguity in the construction of the optimal transference plan $\pi$ between $\bar{\epsilon}^1$ and $\bar{\rho}^1$ (i.e., the transference plan which realizes the infimum in~\eqref{eq:W1}), which is just the tensor product of the measures:
\[ \pi(x,\,y)=\bar{\rho}^1(x)\otimes\bar{\epsilon}^1(y). \]
By substitution in~\eqref{eq:W1} we find
\begin{align*}
	W_1(\bar{\epsilon}^1,\,\bar{\rho}^1) &= \int_{\R^d\times\R^d}\abs{x-y}\,d(\bar{\rho}^1(x)\otimes\bar{\epsilon}^1(y)) \\
	& =\int_{B_r(\bar{X}^1)}\abs{x-\bar{X}^1}\rho^1(x)\,dx \\
	& =\frac{1}{r^d}\int_{B_r(\bar{X}^1)}\abs{x-\bar{X}^1}f\left(\frac{x-\bar{X}^1}{r}\right)\,dx \\
	& =r\int_{B_1(0)}\abs{y}f(y)\,dy & \text{(set $y:=(x-\bar{X}^1)/r$)} \\
	& =m_fr.
\end{align*}

We pass now to characterize transference plans between $\bar{\epsilon}^N$ and $\bar{\rho}^N$ in the case $N>1$. Every element of the continuous mass $\bar{\rho}^N$ is transported onto a delta, a condition that, in the spirit of the so-called \emph{semi-discrete Monge-Kantorovich problem}~\cite{abdellaoui1998JCAM}, can be expressed by a measure $\pi$ on $\R^d\times\R^d$ of the form
\begin{equation}
	\pi(x,\,y)=\bar{\rho}^N(x)\otimes\sum_{j=1}^{N}\alpha_j(x)\delta_{\bar{X}^j}(y),
	\label{eq:representation-splitting-mass}
\end{equation}
where, in order to ensure conservation of the mass, the $\alpha_j$'s are such that
\begin{equation}
	\alpha_j(x)\geq 0, \quad \sum_{j=1}^{N}\alpha_j(x)=1,
		\qquad \forall\,j=1,\,\dots,\,N,\ \forall\,x\in\supp{\bar{\rho}^N}.
	\label{eq:alphaj.1}
\end{equation}
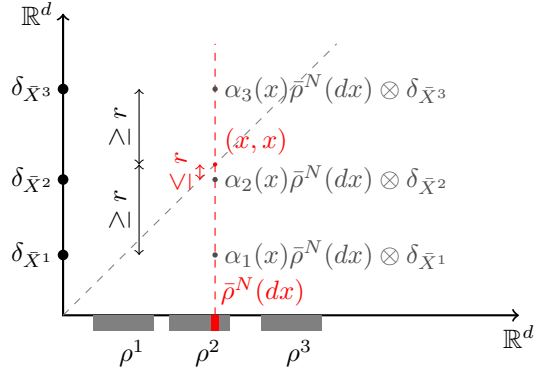
\begin{figure}[!t]
\centering
\begin{tikzpicture}[scale=2, Name/.style = {font={\bfseries}}]
    \draw [<->,thick] (0,2) node (yaxis) [left] {$\R^d$}
        |- (3,0) node (xaxis) [below] {$\R^d$};

    \draw[gray,dashed] (0,0) coordinate (origin) -- (1.8,1.8) coordinate (diag_estr);

    \coordinate (d1) at (0,0.4);
    \coordinate (d2) at (0,0.9);
    \coordinate (d3) at (0,1.5);

    \fill[black] (d1) node [left] {$\delta_{\bar{X}^1}$} circle (1pt);   
    \fill[black] (d2) node [left] {$\delta_{\bar{X}^2}$} circle (1pt);
    \fill[black] (d3) node [left] {$\delta_{\bar{X}^3}$} circle (1pt);
    \fill[gray] (.2,0) coordinate (r_11) rectangle (.6,-.1) node [below left,text=black] {$\rho^1$};
    \fill[gray] (.7,0) coordinate (r_21) rectangle (1.1,-.1) node [below left,text=black] {$\rho^2$};
    \fill[gray] (1.3,0) coordinate (r_31) rectangle (1.7,-.1) node [below left,text=black] {$\rho^3$};

    \coordinate (xcoord) at (1,0);
    \fill[red] ($ (xcoord) + (-.025,0) $) node (xval) [above right] {$\bar\rho^N(dx)$} rectangle ($(xcoord)+(.025,-.1)$);
    \draw[red,dashed] (xcoord) -- ($(xcoord)+(0,1.8)$) coordinate (red_line_top);
    \fill[white!30!black] ($(xcoord)+(d1)$) coordinate (delta1_tens) node [right] {$\alpha_1(x)\bar\rho^N(dx)\otimes\delta_{\bar{X}^1}$} circle (.5pt);
    \fill[white!30!black] ($(xcoord)+(d2)$) coordinate (delta2_tens) node [right] {$\alpha_2(x)\bar\rho^N(dx)\otimes\delta_{\bar{X}^2}$} circle (.5pt);
    \fill[white!30!black] ($(xcoord)+(d3)$) coordinate (delta3_tens) node [right] {$\alpha_3(x)\bar\rho^N(dx)\otimes\delta_{\bar{X}^3}$} circle (.5pt);
 
    \coordinate (xx) at (intersection of origin--diag_estr and xcoord--red_line_top);
    \node [above right,text=red] at (xx) {$(x,x)$};
    \fill[red] (xx) circle (.4pt);
    
    \coordinate (offsetd1) at (-0.5,0);
    \coordinate (offsetd2) at (-0.1,0);

    \draw [<->] ($(xx)+(offsetd1)$) -- ($(delta1_tens)+(offsetd1)$) node [pos=0.5,sloped,above] {$\geq r$};
    \draw [<->] ($(xx)+(offsetd1)$) -- ($(delta3_tens)+(offsetd1)$) node [pos=0.5,sloped,above,rotate=180] {$\geq r$};
    \draw [<->,red] ($(xx)+(offsetd2)$) -- ($(delta2_tens)+(offsetd2)$) node [pos=0.5,sloped,above,text=red] {$\leq r$};
\end{tikzpicture}
\caption{Transference plans~\eqref{eq:representation-splitting-mass} in $\R^d\times\R^d$ for $N=3$. Each continuous mass element is within a distance $r$ from its corresponding Dirac mass. Therefore, owing to~\eqref{eq:disc-min-radius}, it is farther than $r$ from all other Dirac masses.}
\label{fig-reference-splitting}
\end{figure} 
The representation~\eqref{eq:representation-splitting-mass} means that the infinitesimal element of continuous mass $d\bar{\rho}^N(x)$ located in $x\in\supp{\bar{\rho}^N}$ is split in the points $\{\bar{X}^j\}_{j=1}^{N}$ following the convex combination given by the coefficients $\alpha_{j}(x)$ (cf. Fig.~\ref{fig-reference-splitting}). The measure $\pi$ is generally not a transference plan between $\bar{\epsilon}^N$ and $\bar{\rho}^N$, because it is not guaranteed to have marginal $\bar{\epsilon}^N$ with respect to $y$. In particular, a non-unit mass might be allocated in every $\bar{X}^j$. In order to have a transference plan, the further condition
\begin{equation}
	\int_{\R^d}\alpha_j(x)\,d\bar{\rho}^N(x)=1, \qquad \forall\,j=1,\,\dots,\,N
	\label{eq:alphaj.2}
\end{equation}
needs to be enforced.

Let us consider, for a transference plan of the form~\eqref{eq:representation-splitting-mass} with $\bar{\rho}^N$ as in~\eqref{eq:init.rhoN}, the global transportation cost:
\begin{align*}
	\int_{\R^d\times\R^d}\abs{x-y}\,d\pi(x,\,y) &= \int_{\R^d\times\R^d}\abs{x-y}\,d\left(\sum_{i=1}^{N}\rho^i(x)\otimes\sum_{j=1}^{N}\alpha_j(x)\delta_{\bar{X}^j}(y)\right) \\
	& =\sum_{i,j=1}^{N}\int_{\R^d\times\R^d}\abs{x-y}\,d(\rho^i(x)\otimes\alpha_j(x)\delta_{\bar{X}^j}(y)) \\
	&= \sum_{i=1}^{N}\int_{B_r(\bar{X}^i)}\sum_{j=1}^{N}\alpha_j(x)\abs{x-\bar{X}^j}\,d\rho^i(x).
\end{align*}
Owing to~\eqref{eq:alphaj.1} we have
\begin{align*}
	\sum_{j=1}^{N}\alpha_j(x)\abs{x-\bar{X}^j} &= \sum_{j\ne i}\alpha_j(x)\abs{x-\bar{X}^j}+\left(1-\sum_{j\ne i}\alpha_j(x)\right)\abs{x-\bar{X}^i}
	\intertext{whence, for $x\in B_r(\bar{X}^i)$ and taking~\eqref{eq:disc-min-radius} into account,}
	& \geq r\sum_{j\ne i}\alpha_j(x)+\abs{x-\bar{X}^i}-r\sum_{j\ne i}\alpha_j(x) \\
	& =\abs{x-\bar{X}^i},
\end{align*}
thus ultimately
\begin{align*}
	\int_{\R^d\times\R^d}\abs{x-y}\,d\pi(x,\,y) &\geq \sum_{i=1}^{N}\int_{B_r(\bar{X}^i)}\abs{x-\bar{X}^i}\,d\rho^i(x) \\
	& =\sum_{i=1}^{N}\int_{\R^d\times\R^d}\abs{x-y}\,d(\rho^i(x)\otimes\delta_{\bar{X}^i}(y)).
\end{align*}

Notice that the transference plan
\[ \pi=\sum_{i=1}^{N}\rho^i\otimes\delta_{\bar{X}^i} \]
is of the form~\eqref{eq:representation-splitting-mass} for, e.g., the coefficients $\alpha_j(x)=\chi_{B_r(\bar{X}^j)}(x)$ which fulfill both~\eqref{eq:alphaj.1} and~\eqref{eq:alphaj.2}. The previous calculation says that it is actually the optimal transference plan between $\bar{\epsilon}^N$ and $\bar{\rho}^N$, i.e., the one which ensures the minimum transportation cost. Thus
\[ W_1(\bar{\epsilon}^N,\,\bar{\rho}^N)=\sum_{i=1}^{N}\int_{\R^d\times\R^d}\abs{x-y}\,d(\rho^i(x)\otimes\delta_{\bar{X}^i}(y))
	=NW_1(\bar{\epsilon}^1,\,\bar{\rho}^1), \]
whence the thesis follows.
\end{proof}

Thanks to Proposition~\ref{prop:W1.init} we finally obtain the main result of the paper:

\begin{theorem}[Discrete-continuous convergence]
Let $\bar{\epsilon}^N\in\MNone$ be given and let $\bar{\rho}^N\in\MNone$ be constructed as in~\eqref{eq:init.rhoN}-\eqref{eq:f}. Set $r=c_NN^{-\gamma}$, $\gamma>1$, where $0<c_N\leq 1$ is possibly used to enforce condition~\eqref{eq:disc-min-radius}. Let moreover the interaction kernel satisfy~\eqref{eq:KN}-\eqref{eq:alpha.beta}. Then
\[ \lim_{N\to\infty} W_1(\epsilon^N_t,\,\rho^N_t)=0, \qquad \forall\,t\in (0,\,T]. \]
\label{theo:conv.discr-cont}
\end{theorem}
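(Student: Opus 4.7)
The plan is straightforward: chain together the stability estimate \eqref{eq:W1.discr-cont} with the distance computation in Proposition~\ref{prop:W1.init}, and then invoke the decay hypothesis $r=o(N^{-1})$ to kill the right-hand side.

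First, I would record that the scaling conditions \eqref{eq:KN}--\eqref{eq:alpha.beta} bound the amplification factor uniformly in $N$: since $\Lip(K^N_{\alpha,\beta})=\Lip(\cK)/N^{\alpha+\beta}\le\Lip(\cK)/N$, one has $N\Lip(K^N_{\alpha,\beta})\le\Lip(\cK)$ and hence $\xi^N\le\xi^\ast:=2\max\{\Lip(\vd),\Lip(\cK)\}$ for every $N\ge 1$. This observation is already absorbed into \eqref{eq:W1.discr-cont}, which therefore gives, setting $C_T:=\exp(\xi^\ast T(1+e^{\xi^\ast T}))$,
\[
W_1(\epsilon^N_t,\rho^N_t)\le C_T\,W_1(\bar\epsilon^N,\bar\rho^N),\qquad\forall\,t\in(0,T],\ \forall\,N\ge 1.
\]

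Second, I would substitute the explicit formula $W_1(\bar\epsilon^N,\bar\rho^N)=m_f N r$ supplied by Proposition~\ref{prop:W1.init} to obtain
\[
W_1(\epsilon^N_t,\rho^N_t)\le C_T\,m_f\,Nr,
\]
and conclude by observing that $r=o(N^{-1})$ as $N\to\infty$ means precisely $Nr\to 0$, uniformly in $t\in(0,T]$.

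The theorem is essentially a structural corollary of the two preceding results, so I do not anticipate any genuine obstacle. The only point worth stressing is the decisive role played by the exponent condition $\alpha+\beta\ge 1$: without it, $\xi^N$ would grow linearly in $N$, the amplification factor in \eqref{eq:W1.discr-cont} would diverge super-exponentially, and even the fast initial decay $W_1(\bar\epsilon^N,\bar\rho^N)=O(Nr)$ would be powerless to secure convergence at positive times. In this sense Theorem~\ref{theo:conv.discr-cont} bundles, in a single quantitative statement, the three ingredients carefully prepared in Sections~\ref{sec:hyp_non-stat}--\ref{sec:scaling} (continuous dependence, uniform bound on $\xi^N$ via kernel scaling, and explicit distance between discrete and ``bump'' initial data).
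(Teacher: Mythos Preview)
Your proposal is correct and mirrors the paper's own proof almost verbatim: the paper simply invokes \eqref{eq:W1.discr-cont} together with Proposition~\ref{prop:W1.init}, noting that $r=o(N^{-1})$ forces $W_1(\bar\epsilon^N,\bar\rho^N)=o(1)$. Your additional remarks on the role of $\alpha+\beta\ge 1$ and the definition of $C_T$ are accurate elaborations but not part of the paper's terse one-line argument.
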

\begin{proof}
From Proposition~\ref{prop:W1.init} with the given $r$ we obtain:
$$ W_1(\bar{\epsilon}^N,\,\bar{\rho}^N)=\frac{m_fc_N}{N^{\gamma-1}}\xrightarrow{N\to\infty} 0 $$
because $\gamma>1$. Hence the thesis follows from~\eqref{eq:W1.discr-cont}.
\end{proof}

It is worth remarking that, because of~\eqref{eq:disc-min-radius}, in dimension $d$ a bound on the radius $r$ of the form $r<CN^{-1/d}$, where $C>0$ is a constant, holds true e.g., when considering homogeneous distributions of pedestrians in bounded domains (cf. the next example of regular lattices). This is not sufficient by itself to comply with the hypotheses of Theorem~\ref{theo:conv.discr-cont}, but choosing $r$ as
\begin{equation}
	r=CN^{-(1+h)/d}, \qquad N>1,
	\label{eq:r}
\end{equation}
where $h>0$, is instead sufficient if
\[ h>d-1. \]
Then, according to~\eqref{eq:W1.discr-cont} and to Proposition~\ref{prop:W1.init}, the distance between the discrete and continuous solutions to~\eqref{eq:cauchy-problem} scales with $N$ as
\[ W_1(\epsilon^N_t,\,\rho^N_t)\leq O\left(N^{(d-1-h)/d}\right) \quad \text{for} \quad N\to\infty. \]

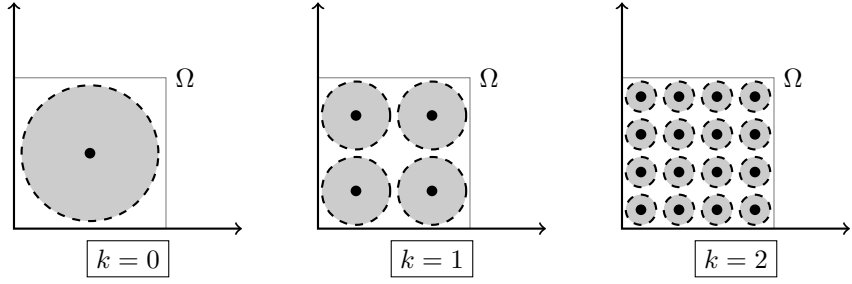
\begin{figure}[!t]
\centering
\begin{tikzpicture}[scale=2,Name/.style={font={\bfseries}}]
	\draw[gray] (origin) rectangle (1,1);
	\draw  [<->,thick] (0,1.5) node (yaxis) [left] {}
        |- (1.5,0) node (xaxis) [below] {};
	\fill[gray!40!white] (.5,.5) circle (.45);
	\draw[black,thick,dashed] (.5,.5) circle (.45);
	\fill[black] (.5,.5) node [below] {} circle (1pt);

	\node[draw=none,right] at (1,1) {$\Omega$};
	\node[draw,align=left] at (.75,-.2) {$k=0$};
	
	\draw[gray] (2,0)  rectangle (3,1) ;
	\draw[<->,thick] (2,1.5) node (yaxis) [left] {}
       |- (3.5,0) node (xaxis) [below] {};

	\foreach \x in {2.25,2.75}
	\foreach \y in {.25,.75} 
		{
			\fill[gray!40!white] (\x,\y) circle (.225);
			\draw[black,thick,dashed] (\x,\y) circle (.225);
			\fill[black] (\x,\y) node [below]  {}  circle (1pt);
		}
	\node[draw=none,right] at (3,1) {$\Omega$};
	\node[draw,align=left] at (2.75,-.2) {$k=1$};
	\draw[gray] (4,0) rectangle (5,1);
	\draw[<->,thick] (4,1.5) node (yaxis) [left] {}
       |- (5.5,0) node (xaxis) [below] {};

	\foreach \x in {4.125,4.375,4.625,4.875}
	\foreach \y in {.125,.375,.625,.875} 
		{
			\fill[gray!40!white] (\x,\y) circle (.1);
			\draw[black,thick,dashed] (\x,\y) circle (.1);
			\fill[black] (\x,\y) node [below] {} circle (1pt);
		}
	\node[draw=none,right] at (5,1) {$\Omega$};
	\node[draw,align=left] at (4.75,-.2) {$k=2$};
\end{tikzpicture}
\caption{Discrete pedestrians (dots) and their continuous counterparts (circular regions delimited by dashed lines) in $\Omega=[0,\,1]^2$ ($d=2$) and for $k=0,\,1,\,2$.}
\label{fig:example-reg-lattice} 
\end{figure}

\begin{example}[Regular lattices]
Homogeneous pedestrian distributions can be obtained, for instance, by considering regular lattices.

Let $\Omega=[0,\,1]^d$. We partition it in $N_k=2^{kd}$ equal hypercubes of edge size $2^{-k}$, then we position $N_k$ discrete pedestrians $\{\bar{X}^i\}_{i=1}^{N_k}$ in their centroids, cf.~Fig.~\ref{fig:example-reg-lattice}, so that
\[ \min_{i\ne j}\abs{\bar{X}^j-\bar{X}^i}=2^{-k}. \]
Owing to~\eqref{eq:disc-min-radius} we need then to take
\[ r<2^{-(k+1)}=\frac{1}{2}N_k^{-1/d}, \]
which, following~\eqref{eq:r}, we satisfy by setting $r=2^{-(1+(h+1)k)}$ for $h>0$. For this value of $r$, let us set
\[ \rho^i(x)=\frac{d}{\omega_dr^d}\chi_{B_r(\bar{X}^i)}(x), \qquad i=1,\,\dots,\,N, \]
where $\omega_d$ is the surface area of the unit ball in $\R^d$ and $\chi$ the characteristic function. These $\rho^i$'s are of the form~\eqref{eq:init.rhoN} for $f(x)=\frac{d}{\omega_d}\chi_{B_1(0)}(x)$, which complies with~\eqref{eq:f} and moreover is such that $m_f=\frac{d}{1+d}$. This entails:
\begin{itemize}
\item in dimension $d=1$,
\[ W_1(\bar{\epsilon}^{N_k},\,\bar{\rho}^{N_k})=2^{-2-hk}, \]
which converges to zero for $k\to\infty$ if $h>0$;
\item in dimension $d=2$,
\[ W_1(\bar{\epsilon}^{N_k},\,\bar{\rho}^{N_k})=\frac{2^{(1-h)k}}{3}, \]
which converges to zero for $k\to\infty$ if $h>1$;
\item in dimension $d=3$,
\[ W_1(\bar{\epsilon}^{N_k},\,\bar{\rho}^{N_k})=3\cdot 2^{(2-h)k-3}, \]
which converges to zero for $k\to\infty$ if $h>2$.
\end{itemize}
\end{example}
 
\section{Discussion}
\label{sec:discussion}
In this paper we have investigated microscopic and macroscopic differential models of systems of interacting particles, chiefly inspired by human crowds, for an increasing number $N$ of total agents. The main novelty was the consideration of \emph{massive} particles, i.e., particles whose mass does not scale with the number $N$. This implies that the continuous model is not obtained in the limit $N\to\infty$ from the discrete model, rather it is postulated \emph{per se} for every value of $N$. 
The question then arises under which conditions  the discrete and continuous models are  counterparts of one another at smaller and larger scales.

In particular, we have proved that the solutions to the following two models:
\begin{align*}
	& \dot{X}^i_t=\vd(X^i_t)+\frac{1}{N^\alpha}\sum_{j=1}^{N}\cK\left(\frac{X^j_t-X^i_t}{N^\beta}\right),
		\quad i=1,\,\dots,\,N \\[2mm]
	& \partial_t\rho_t+\div\left(\rho_t\left(\vd+\frac{1}{N^\alpha}\int_{\R^d}\cK\left(\frac{y-\cdot}{N^\beta}\right)\rho_t(y)\,dy\right)\right)=0,
		\quad \rho_t(\R^d)=N,
\end{align*}
where the velocity field $\vd$ and the interaction kernel $\cK$ are assumed to be Lipschitz continuous, converge to one another in the sense of the $1$-Wasserstein distance when $N\to\infty$ if the parameters $\alpha$, $\beta$ are such that $\alpha+\beta\geq 1$ and if, in addition, the respective initial conditions approximate each other. This fact, which is schematically illustrated in Fig.~\ref{fig:limit_massive}, has  implications from the modeling point of view, especially as far as the role of single scales and their possible coupling are concerned.

\begin{figure}[!t]
\centering
\includegraphics[width=\textwidth]{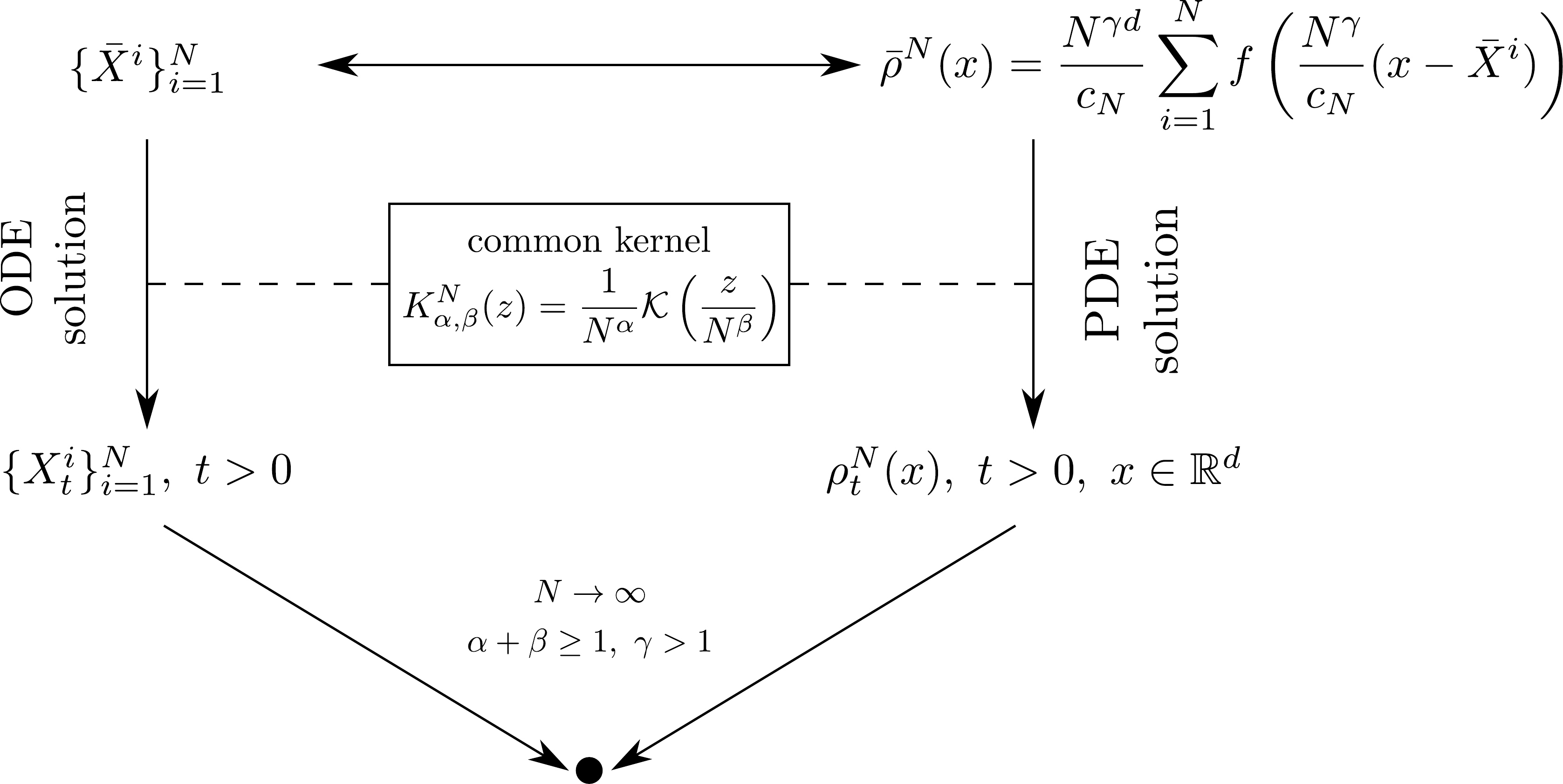}

\caption{A schematic illustration of the result of the paper. The ODE and PDE solutions approach each other for $N\to\infty$ under certain conditions on the scaling of the interactions and of the initial conditions.}
\label{fig:limit_massive}
\end{figure}

First of all, we point out that interactions are modeled in an $N$-dependent way by means of the kernel
\[ K^N_{\alpha,\beta}(z)=\frac{1}{N^\alpha}\cK\left(\frac{z}{N^\beta}\right). \]
More specifically, the function $\cK$ expresses the basic interaction trend (for instance, repulsion) while the factors $N^{-\alpha}$, $N^{-\beta}$ modulate it depending on the total number of particles. This is consistent with the idea that particles like e.g., pedestrians do not behave the same regardless of their number. The strength of their mutual repulsion or the acceptable interpersonal distances may vary considerably from free to congested situations. We model this aspect by 
acting on the values of $\alpha$, $\beta$. Hence, the various scalings contained in the two-parameter family of kernels $K^N_{\alpha,\beta}$ correspond to different \emph{interpersonal attitudes} of the particles for growing $N$. As our analysis demonstrates, the latter need to be taken into account for ensuring consistency of a given interaction model at different scales. In this respect, we have shown that if such a scaling is neglected the discrete and continuous models predict quantitatively different, albeit qualitatively analogous, emerging equilibria. In particular, they yield different asymptotic speeds of the particles, which do not approach each other as $N$ grows.

Second,  our  analysis shows the correct parallelism between first order microscopic and macroscopic models which do not originate from one another but are formulated independently by aprioristic choices of the scales. In this view, the utility of such a parallelism is twofold. On  one hand, it accounts for the \emph{interchangeability of the two models} at sufficiently high numbers of particles, with:
\begin{inparaenum}[(i)]
\item the possibility to switch from a microscopic to a macroscopic description, which may be more convenient for the \emph{a posteriori} calculation of observable quantities and statistics of interest for applications;
\item the possibility to infer qualitative properties at one scale from their rigorous knowledge at the other scale (for instance, the microscopic model can be expected to exhibit qualitatively the same nonlinear diffusive behavior for large $N$ which is typically proved quantitatively for macroscopic conservation laws with nonlocal repulsive flux).
\end{inparaenum}
On the other hand, it allows one to motivate and support \emph{multiscale couplings} of microscopic and macroscopic models~\cite{cristiani2011MMS,cristiani2014BOOK}, which are supposed to provide a \emph{dual representation} of the same particle system at different scales. In this case, the interest does not lie as much in the congested regime (large $N$), where the two models have been proved to be equivalent, but rather in the moderately crowded one, where the discrete and continuous solutions can complement each other with effects which would not be recovered at a single scale.

\appendix

\section{Technical proofs}
\label{app:proofs}

\begin{proof}[Proof of Proposition~\ref{prop:stability_micro}]
First we observe that the measure~\eqref{eq:epsilonN-timet} is a solution to~\eqref{eq:micro-proto}, in fact using~\eqref{eq:equispaced-lattice} in~\eqref{eq:micro-proto} together with Assumption~\ref{ass:K-properties}\eqref{ass:K-compact_supp}-\eqref{ass:K-monotone} we get
\[ \dot{X}^i_t=\vd-\sum_{j>i}K\left(\left(j-i\right)\frac{L}{N}\right), \] 
which reduces to~\eqref{eq:vel-lattice} by setting $h=j-i$.

To show that $\epsilon_t$ is stable and possibly attractive we use a perturbation argument. We define the perturbed positions $\tilde{X}^i_t=X^i_t+\eta^i_t$, then we plug them into~\eqref{eq:micro-proto} to find
\[ \dot{\tilde{X}}^i_t=\vd-\sum_{j=1}^{N}K(X_t^j-X^i_t+\eta^j_t-\eta^i_t). \]
Linearizing this around $X^j_t-X^i_t$ and using Assumption~\ref{ass:K-properties}\eqref{ass:K-compact_supp} gives
\[ \dot{\tilde{X}}^i_t=w-\sum_{j>i}K'\left((j-i)\frac{L}{N}\right)(\eta^j_t-\eta^i_t), \]
which, considering that $\dot{\eta}^i_t=\dot{\tilde{X}}^i_t-w$, further yields
\begin{equation}
	\dot{\eta}^i_t=-\sum_{j>i}K'\left((j-i)\frac{L}{N}\right)(\eta^j_t-\eta^i_t)
		=-\sum_{h=1}^{N-1}K'\left(h\frac{L}{N}\right)(\eta^{i+h}_t-\eta^i_t).
	\label{eq:perturbation-discrete-sum}
\end{equation}

Finally, we claim that $\epsilon_t$ is:
\begin{enumerate}[(a)]
\item \label{stable} stable if $R\leq L/N$;
\item \label{stable_attr} stable and attractive if $R>L/N$.
\end{enumerate}

In case~\eqref{stable} the sum in~\eqref{eq:perturbation-discrete-sum} is zero as $K'(L/N)=K'(2L/N)=\ldots=0$ by Assumption~\ref{ass:K-properties}\eqref{ass:K-compact_supp}. Therefore, small perturbations remain constant over time, which is sufficient to ensure stability.

In case~\eqref{stable_attr} we make the ansatz
\begin{equation}
	\eta^i_t=\sum_{k=1}^{N-1}C_ke^{\sigma_kt+\iunit\frac{2\pi}{N}ki}
		\qquad (\iunit=\textup{imaginary unit}),
	\label{eq:expansion} 
\end{equation}
where $C_k\in\R$, which reflects the periodicity of $\eta^i_t$ with respect to $i$. Notice that the expansion above starts from $k=1$ because the term for $k=0$ is not relevant in the present context, in fact it corresponds simply to a further rigid translation of the $X^i_t$'s. After substituting in~\eqref{eq:perturbation-discrete-sum}, we obtain that~\eqref{eq:expansion} is a solution as long as
\[ \sigma_k=-\sum_{h=1}^{N-1}K'\left(h\frac{L}{N}\right)\left(e^{\iunit\frac{2\pi}{N}kh}-1\right), \]
whence 
\begin{equation} 
	\Real(\sigma_k)=\sum_{h=1}^{N-1}K'\left(h\frac{L}{N}\right)\left(1-\cos\left(\frac{2\pi}{N}kh\right)\right).
	\label{eq:real.part.lambda}
\end{equation}
Since $0<h,\,k<N$, it results $1-\cos(2\pi hk/N)>0$. Because of Assumption~\ref{ass:K-properties}\eqref{ass:K-monotone}, every term of the sum at the right-hand side of~\eqref{eq:real.part.lambda} is either negative (for $hL/N<R$, i.e., within the sensory region) or zero (for $hL/N\geq R$, i.e., outside the sensory region). Moreover, since $R>L/N$, the sum has at least a non-vanishing term (for $h=1$). Therefore $\Real(\sigma_k)<0$ for all $k=1,\,2,\,\dots,\,N-1$ and we have stability and attractiveness.
\end{proof}

\begin{proof}[Proof of Proposition~\ref{prop:stability_macro}]
The constant solution~\eqref{eq:homog-meas} is clearly a solution to~\eqref{eq:macro-proto} when $\vd$ is constant, for then
\[ v[\bar{\rho}]=\vd-\bar{\rho}\int_{x}^{x+R}K(y-x)\,dy=\vd-\bar{\rho}\int_{0}^{R}K(z)\,dz \]
is constant as well. To study its local stability we consider a perturbation of it of the form
\[ \tilde{\rho}_t=\bar{\rho}+\eta\varrho_t \]
for $\eta\in\R$, then we plug it into~\eqref{eq:macro-proto} to have
\[ \partial_t(\bar{\rho}+\eta\varrho_t)+\partial_x((\bar{\rho}+\eta\varrho_t)v[\bar{\rho}+\eta\varrho_t])=0. \]
In the limit of small $\eta$ this gives the following linearized equation for the perturbation:
\begin{equation}
	\partial_t\varrho_t+\partial_x(\bar{\rho}v[\varrho_t]+\varrho_t v[\bar{\rho}])=0,
	\label{eq:linear-stab}
\end{equation} 
for which we make the ansatz of periodic solution in space:
\[ \varrho_t(x)=\sum_{k\in\Z}C_ke^{\sigma_kt+\iunit\frac{2\pi}{L}kx}
	\qquad (\iunit=\textup{imaginary unit}), \]
with $C_k\in\R$. Actually, similarly to the microscopic case in Proposition~\ref{prop:stability_micro}, we can neglect the term of the sum for $k=0$, because again it corresponds to a constant in space perturbation.

By linearity we consider one term of the sum at a time, i.e., we take $\varrho_t(x)=C_ke^{\sigma_kt+\iunit\frac{2\pi}{L}kx}$. Substituting in~\eqref{eq:linear-stab} we find
\[ \sigma_k+\iunit\frac{2\pi}{L}k\left(v[\bar{\rho}]+\bar{\rho}\mathcal{K}_k\right)=0 \quad
	\textup{with} \quad \mathcal{K}_k:=-\int_0^L K(z)e^{\iunit\frac{2\pi}{L}kz}\,dz. \]
The asymptotic trend in time of $\varrho_t$ depends on $\Real(\sigma_k)$, which, according to the previous equation, is given by
\[ \Real(\sigma_k)=\frac{2\pi}{L}k\bar{\rho}\Img(\mathcal{K}_k)=
	-\frac{2\pi}{L}k\bar{\rho}\int_0^L K(z)\sin\left(\frac{2\pi}{L}kz\right)\,dz. \]

We claim that $\Real(\sigma_k)<0$ for all $k\neq 0$.

First, we observe that $\frac{2\pi}{L}k\bar{\rho}\Img(\mathcal{K}_k)$ is even in $k$, since it is the product of two odd functions in $k$. Thus $\Real(\sigma_{-k})=\Real(\sigma_{k})$ and we can confine ourselves to $k>0$. Second, $\Img(\mathcal{K}_k)$ can be written as
\begin{align}
	\begin{aligned}[c]
		\Img(\mathcal{K}_k) &= -\int_0^L K(z)\sin\left(\frac{2\pi}{L}kz\right)\,dz \\
		& =-\sum_{q=0}^{k-1}\int_{qL/k}^{(q+1)L/k}K(z)\sin\left(\frac{2\pi}{L}kz\right)\,dz
	\end{aligned}
	\label{eq:sum-decomposition}
\end{align}
and, in addition, for each term of the sum at the right-hand side it holds
\begin{multline*}
	\int_{qL/k}^{(q+1)L/k}K(z)\sin\left(\frac{2\pi}{L}kz\right)\,dz \\
		=\int_{qL/k}^{(q+1/2)L/k}\left(K(z)-K\left(z+\frac{L}{2k}\right)\right)\sin\left(\frac{2\pi}{L}kz\right)\,dz.
\end{multline*}
In the interval $[qL/k,\,(q+1/2)L/k]$ the sine function is nonnegative. Furthermore, in view of Assumption~\ref{ass:K-properties}\eqref{ass:K-compact_supp}-\eqref{ass:K-monotone}, $K$ is globally non-increasing, thus the integral above is nonnegative for all $k>0$. Consequently, the sum in~\eqref{eq:sum-decomposition} is non-positive and, owing to Assumption~\ref{ass:K-properties}\eqref{ass:K-monotone}, it has at least one strictly negative element corresponding to $q=0$, whence the claim follows.
\end{proof}

\begin{proof}[Proof of Lemma~\ref{lemma:reg_gamma}]
\begin{enumerate}[(i)]
\item From~\eqref{eq:def-flow-map} and~\eqref{eq:Lip_v} we obtain
\begin{align*}
	\abs{\gamma_t(y)-\gamma_t(x)} &\leq \abs{y-x}+\int_0^t\abs{v[\mu_s](\gamma_s(y))-v[\mu_s](\gamma_s(x))}\,ds \\
 	& \leq\abs{y-x}+\xi^N\int_0^t\abs{\gamma_s(y)-\gamma_s(x)}\,ds,
\end{align*}
whence the thesis follows by invoking Gronwall's inequality.
\item Again by~\eqref{eq:def-flow-map} and~\eqref{eq:Lip_v} we have
\begin{align*}
	\abs{\gamma^\nu_t(x)-\gamma^\mu_t(x)} &\leq \int_0^t\abs{v[\nu_s](\gamma^\nu_s(x))-v[\mu_s](\gamma^\mu_s(x))}\,ds \\
	& \leq\xi^N\left(\int_0^t\abs{\gamma^\nu_s(x)-\gamma^\mu_s(x)}\,ds+\frac{1}{N}\int_0^t W_1(\mu_s,\,\nu_s)\,ds\right),
\end{align*}
whence again Gronwall's inequality yields the result. \qedhere
\end{enumerate}
\end{proof}

\begin{proof}[Proof of Proposition~\ref{prop:cont_dep}]
Here we use the expression~\eqref{eq:W1_dual} of $W_1$. Let $\phi\in\Lipone$, then using the notation introduced in Lemma~\ref{lemma:reg_gamma}\eqref{lemma:gamma_mu-nu} and recalling~\eqref{eq:pushfwd} we have:
\begin{align*}
	\int_{\R^d}\phi(x)\,d(\nu_t-\mu_t)(x) &= \int_{\R^d}\phi(x)\,d(\gamma^\nu_t\#\bar{\nu}-\gamma^\mu_t\#\bar{\mu})(x) \\
	& =\int_{\R^d}\phi(\gamma^\nu_t(x))\,d\bar{\nu}(x)-\int_{\R^d}\phi(\gamma^\mu_t(x))\,d\bar{\nu}(x) \\
	&\qquad +\int_{\R^d}\phi(\gamma^\mu_t(x))\,d\bar{\nu}(x)-\int_{\R^d}\phi(\gamma^\mu_t(x))\,d\bar{\mu}(x) \\
	& =\int_{\R^d}(\phi(\gamma^\nu_t(x))-\phi(\gamma^\mu_t(x)))\,d\bar{\nu}(x) \\
	&\qquad +\int_{\R^d}\phi(\gamma^\mu_t(x))\,d(\bar{\nu}-\bar{\mu})(x) \\
	& \leq\int_{\R^d}\abs{\gamma^\nu_t(x)-\gamma^\mu_t(x)}\,d\bar{\nu}(x)+e^{\xi^Nt}W_1(\bar{\mu},\,\bar{\nu}),
\intertext{where in the last term at the right-hand side we have used the fact that the function $x\mapsto\phi(\gamma^\mu_t(x))$ is Lipschitz continuous in view of Lemma~\ref{lemma:reg_gamma}\eqref{lemma:gamma_Lip}. Invoking furthermore Lemma~\ref{lemma:reg_gamma}\eqref{lemma:gamma_mu-nu} we continue as}
	& \leq\frac{\xi^Ne^{\xi^Nt}}{N}\int_{\R^d}\int_0^t W_1(\mu_s,\,\nu_s)\,ds\,d\bar{\nu}+e^{\xi^Nt}W_1(\bar{\mu},\,\bar{\nu}) \\
	& =\xi^Ne^{\xi^Nt}\int_0^t W_1(\mu_s,\,\nu_s)\,ds+e^{\xi^Nt}W_1(\bar{\mu},\,\bar{\nu}).
\end{align*}
Taking the supremum over $\phi$ of both sides we obtain
\[ W_1(\mu_t,\,\nu_t)\leq\xi^Ne^{\xi^NT}\int_0^t W_1(\mu_s,\,\nu_s)\,ds+e^{\xi^Nt}W_1(\bar{\mu},\,\bar{\nu}), \]
where in the first term at the right-hand side we have further used $t\leq T$. Finally we apply Gronwall's inequality and we are done.
\end{proof}

\bibliographystyle{amsplain} 
\bibliography{CaTa-discrcont}  
\end{document}